\theoremstyle{definition}
\newtheorem{theorem}{Theorem}[section]
\newtheorem{proposition}[theorem]{Proposition}
\newtheorem{lemma}[theorem]{Lemma}
\newtheorem{definition}[theorem]{Definition}
\newtheorem{conjecture}[theorem]{Conjecture}
 \title{Maximizing  the number of $x$-colorings of $4$-chromatic graphs}
    \author{Aysel Erey\footnote{Department of Mathematics, University of Denver, aysel.erey@gmail.com}}
    \date{  \today }
\begin{document}

\maketitle

\begin{abstract}
	Let $\mathcal{C}_4(n)$ be the family of all connected $4$-chromatic graphs of order $n$. Given an integer $x\geq 4$, we consider the problem of finding the maximum number of $x$-colorings of a graph in $\mathcal{C}_4(n)$. It was conjectured that the maximum number of $x$-colorings is equal to $(x)_{\downarrow 4}(x-1)^{n-4}$ and the extremal graphs are those which have clique number $4$ and size $n+2$.
	
	In this article, we reduce this problem to a \textit{finite} family of graphs. We  show that there exist a finite family $\mathcal{F}$ of connected $4$-chromatic graphs such that if the number of $x$-colorings of every graph $G$ in $\mathcal{F}$ is less than $(x)_{\downarrow 4}(x-1)^{|V(G)|-4}$ then the conjecture holds to be true.
	
\end{abstract}

\noindent \thanks{\textit{Keywords}:
$x$-colouring, chromatic number, $k$-chromatic, chromatic polynomial, $k$-connected, subdivision, theta graph}

\section{Introduction}

In recent years problems of maximizing the number of colorings over various families of graphs have received a considerable amount of attention in the literature, see, for example, \cite{brownerey,engbers, engbersgalvin, dongbook, ereyjoc, loh, ma, tofts, zhao}. A natural graph family to look at is the family of connected graphs with fixed chromatic number and fixed order. Let $\mathcal{C}_k(n)$ be the family of all connected $k$-chromatic graphs of order $n$. What is the maximum number of $k$-colorings among all graphs in $\mathcal{C}_k(n)$? Or more generally, for an integer $x\geq k$, what is the maximum number of $x$-colorings of a graph in $\mathcal{C}_k(n)$ and what are the extremal graphs? The answer to this question depends on the chromatic number $k$. When $k\leq 3$, the answer to this question is known and when $k\geq 4$ the problem is wide open. It is well known that (see, for example, \cite{dongbook})  for $k=2$ and $x\geq 2$, the maximum number of $x$-colorings of a graph in $\mathcal{C}_2(n)$ is equal to $x(x-1)^{n-1}$, and extremal graphs are trees when $x\geq 3$. For $k=3$, Tomescu~\cite{tomescu} settled the problem by showing the following:

\begin{theorem}\cite{tomescu} \label{3chromtomes} If $G$ is a graph in  $\mathcal{C}_3(n)$ then
$$\pi(G,x)\leq (x-1)^{n}-(x-1) \ \ \ \text{for odd} \ n$$
and
$$\pi(G,x)\leq (x-1)^{n}-(x-1)^2 \ \ \ \text{for even} \ n$$
 for every integer $x\geq 3$. Furthermore, the extremal graph is the odd cycle $C_{n}$ when $n$ is odd and odd cycle with a vertex of degree $1$ attached to the cycle (denoted $C_{n-1}^1$) when $n$ is even.
 \end{theorem}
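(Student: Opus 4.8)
The plan is to fix an integer $x\ge 3$ and argue by induction on $|E(G)|$ for $G\in\mathcal{C}_3(n)$, using deletion--contraction to strip ``inessential'' edges until we land in a base case consisting of an explicitly describable family, where the chromatic polynomial can be written down and maximized by hand. As a preliminary one checks that the two candidate graphs attain the stated values: from the standard formula $\pi(C_m,x)=(x-1)^m+(-1)^m(x-1)$ and the fact that attaching a pendant vertex multiplies the chromatic polynomial by $(x-1)$, one gets $\pi(C_n,x)=(x-1)^n-(x-1)$ when $n$ is odd and $\pi(C_{n-1}^1,x)=(x-1)\,\pi(C_{n-1},x)=(x-1)^n-(x-1)^2$ when $n$ is even. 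Hence only the upper bound and the uniqueness of the extremal graph require an argument.

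The heart of the proof is a structural observation: if $G$ is connected, non-bipartite, and \emph{edge-minimal} with these two properties (deleting any edge disconnects it or makes it bipartite), then $G$ is unicyclic and its unique cycle is odd. Indeed, $G$ contains an odd cycle $C$; for each $e\in E(C)$ the graph $G-e$ is connected (an edge on a cycle is not a bridge), so by minimality $G-e$ is bipartite, i.e.\ every odd cycle of $G$ passes through $e$; since this holds for every $e\in E(C)$, the only odd cycle of $G$ is $C$ itself; and if $G$ had a further cycle $D\neq C$, choosing $f\in E(D)\setminus E(C)$ would make $G-f$ connected and still contain the odd cycle $C$, contradicting minimality. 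So $G$ has cycle rank $1$. One also notes that ``edge-minimal within $\mathcal{C}_3(n)$'' coincides with ``edge-minimal connected non-bipartite'', because deleting an edge from a member of $\mathcal{C}_3(n)$ can only lower the chromatic number, so the only way to leave $\mathcal{C}_3(n)$ is to become disconnected or bipartite.

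For the inductive step, if $G\in\mathcal{C}_3(n)$ is not edge-minimal there, then some $e$ satisfies $G-e\in\mathcal{C}_3(n)$, and $\pi(G,x)=\pi(G-e,x)-\pi(G/e,x)\le\pi(G-e,x)$ because chromatic polynomials are non-negative at non-negative integers; the bound follows from the inductive hypothesis applied to $G-e$. For the base case, the structural observation says $G$ is unicyclic with an odd cycle $C_\ell$, where $3\le\ell\le n$ and $\ell$ is odd, and then, by stripping pendant vertices, $\pi(G,x)=\pi(C_\ell,x)(x-1)^{n-\ell}=(x-1)^n-(x-1)^{\,n-\ell+1}$. Since $x-1\ge 2$, this quantity increases with $\ell$, so it is largest when $\ell$ is the greatest odd integer not exceeding $n$, namely $\ell=n$ for odd $n$ and $\ell=n-1$ for even $n$; substituting yields exactly the two claimed expressions.

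For uniqueness, equality forces $G$ to be edge-minimal, since otherwise the inequality $\pi(G,x)\le\pi(G-e,x)$ is strict for all large $x$ (as $\pi(G/e,x)>0$ once $x\ge\chi(G/e)$); hence $G$ is unicyclic with an odd cycle $C_\ell$, and equality in $\pi(G,x)=(x-1)^n-(x-1)^{n-\ell+1}$ then forces $\ell$ to be maximal, so $G=C_n$ for odd $n$ and $G=C_{n-1}^1$ for even $n$, the location of the pendant vertex being irrelevant to the chromatic polynomial. The only content-bearing step is the base case --- the structural characterization of the edge-minimal graphs together with the maximization over that finite family; the inductive step is a routine deletion--contraction together with non-negativity of $\pi$ at integers, and the preliminary identities are standard. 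The point most in need of care is making the structural lemma airtight and keeping track of when the displayed inequalities are strict for the uniqueness claim.
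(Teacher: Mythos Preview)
The paper does not give its own proof of this theorem: it is quoted as a background result of Tomescu (reference~\cite{tomescu}) and is used as a black box in the proof of Lemma~\ref{3connreduction}. So there is no ``paper's proof'' to compare against, and your write-up must stand on its own.

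Your argument for the upper bound is correct and clean. The structural lemma (edge-minimal connected non-bipartite $\Rightarrow$ unicyclic with a unique odd cycle) is proved properly, and the maximization of $(x-1)^n-(x-1)^{\,n-\ell+1}$ over odd $\ell\le n$ is straightforward.

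The uniqueness part, however, has a genuine gap. You write that ``equality forces $G$ to be edge-minimal, since otherwise the inequality $\pi(G,x)\le\pi(G-e,x)$ is strict for all large $x$ (as $\pi(G/e,x)>0$ once $x\ge\chi(G/e)$)''. But the theorem asserts uniqueness for \emph{every} integer $x\ge 3$, and contracting an edge of a $3$-chromatic graph can raise the chromatic number above $3$. For instance, take $K_4$ on $\{a,b,c,d\}$, split $a$ into $a_1\!\sim\! b$ and $a_2\!\sim\! c,d$, and add the edge $a_1a_2$; the resulting graph $G$ is $3$-chromatic, $G-a_1a_2\in\mathcal C_3(5)$, yet $G/a_1a_2\cong K_4$ and $\pi(G/a_1a_2,3)=0$. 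So ``$\pi(G/e,x)>0$'' is not available for the particular $x$ at hand, and your deduction that an extremal $G$ must be edge-minimal is unjustified as stated.

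The fix is to argue at the \emph{last} step of the descent rather than the first. Descend from $G$ to an edge-minimal spanning subgraph $G'\in\mathcal C_3(n)$; the chain $G=G_0\supset G_1\supset\cdots\supset G_m=G'$ with $G_{i+1}=G_i-e_{i+1}$ stays inside $\mathcal C_3(n)$ because every $G_i$ contains $G'$ (hence is connected with an odd cycle) and is contained in $G$ (hence is $3$-colourable). If the bound is attained, then $G'$ must be the extremal graph $C_n$ or $C_{n-1}^{\,1}$. Now if $G\ne G'$, look at $G_{m-1}=G'+e_m$: here $(G'+e_m)/e_m$ is obtained from $C_n$ or $C_{n-1}^{\,1}$ by identifying two non-adjacent vertices, and a short case check shows that this graph is always $3$-colourable, so $\pi((G'+e_m)/e_m,x)>0$ for every $x\ge 3$. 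Hence $\pi(G,x)\le\pi(G_{m-1},x)<\pi(G',x)$, contradicting extremality. You already flagged that ``keeping track of when the displayed inequalities are strict'' is the delicate point; this is exactly where the care is needed, and the repair is local.
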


Let $\mathcal{C}^*_k(n)$ be the set of all graphs in $\mathcal{C}_k(n)$ which have clique number $k$ and size ${k \choose 2}+n-k$ (see Figure~\ref{extremalgraphspicture}). It is easy to see that if $G\in \mathcal{C}^*_k(n)$ then $\pi(G,x)=(x)_{\downarrow k}\,(x-1)^{n-k}$ where $(x)_{\downarrow k}$ is the $k$th falling factorial $x(x-1)(x-2)\cdots (x-k+1)$. Tomescu~\cite{tomescufrench} conjectured that when $k\geq 4$, the maximum number of $k$-colorings of a graph in $\mathcal{C}_k(n)$ is equal to $k!(k-1)^{n-k}$ and extremal graphs belong to $\mathcal{C}^*_k(n)$.

\begin{conjecture}\cite{tomescufrench}
If $G\in \mathcal{C}_k(n)$ where $k\geq 4$ then $$\pi(G,k)\leq k!\,(k-1)^{n-k}$$ and extremal graphs belong to $\mathcal{C}^*_k(n)$.
\end{conjecture}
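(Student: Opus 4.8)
\medskip
\noindent\textbf{Proof proposal for four-chromatic graphs.}
The plan is to settle the conjecture for $k=4$ by reducing it, through a short chain of structural simplifications, to a bound on one transparent family, and then isolating the finitely many graphs that genuinely must be checked. Fix an integer $x\geq 4$ (all estimates are to hold uniformly in $x$) and argue by strong induction on $n$. \emph{Reduction to $4$-critical graphs.} Every connected $4$-chromatic $G$ contains a $4$-critical subgraph $H$ (hence $2$-connected), on $n_H$ vertices; since $G$ is connected we may adjoin to $H$, one at a time, $n-n_H$ edges of $G$, each joining a vertex outside the current subgraph to it, producing a spanning connected $G''\subseteq G$ with $\pi(G'',x)=\pi(H,x)(x-1)^{\,n-n_H}$ and $\pi(G,x)\leq\pi(G'',x)$. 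So it suffices to show $\pi(H,x)\leq(x)_{\downarrow 4}(x-1)^{\,n_H-4}$ for every $4$-critical $H$, with equality only for $H=K_4$: then $K_4$ yields equality, and in the equality case of the displayed bound $G''=K_4$ with a tree attached is chordal of clique number $4$, so (since such a graph admits a $4$-colouring with any prescribed pair of nonadjacent vertices monochromatic) deletion--contraction shows that adjoining any further edge keeps $\chi=4$ and strictly lowers the colour count, forcing $G=G''\in\mathcal{C}^*_4(n)$. (Equivalently, one can run the hands-on reduction signalled by the connectivity hypotheses: strip cut vertices via $\pi(G,x)=\pi(G_1,x)\pi(G_2,x)/x$, then degree-$2$ vertices via $\pi(G,x)=(x-1)\pi(G-v,x)-\pi((G-v)+ab,x)$ with $a,b$ the neighbours of $v$, using that $\chi(G-v)=4$ automatically; long suspended paths are treated by chromatic-polynomial identities for subdivisions, whose lower-order terms are governed by a theta-graph computation.)

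\emph{Splitting by order; the finite family.} By the Kostochka--Yancey bound (refining Gallai), a $4$-critical graph on $n_H$ vertices has at least $\lceil(5n_H-2)/3\rceil$ edges, and $K_4$ is the only $4$-critical graph with $n_H\leq 5$; for $n_H\geq 6$ we must prove the \emph{strict} inequality $\pi(H,x)<(x)_{\downarrow 4}(x-1)^{\,n_H-4}$. The decisive step is to do this for all large $n_H$: there should be an absolute constant $N_0$ with $\pi(H,x)<(x)_{\downarrow 4}(x-1)^{\,n_H-4}$ for every $4$-critical $H$ with $n_H\geq N_0$ and every integer $x\geq 4$. The mechanism: $(x)_{\downarrow 4}(x-1)^{\,n_H-4}$ equals $\tfrac{(x-2)(x-3)}{(x-1)^2}$ times the spanning-tree count $x(x-1)^{\,n_H-1}$, while criticality forces an edge surplus $m_H-(n_H-1)\geq(2n_H+1)/3$ over a spanning tree; any fixed multiplicative saving (a factor $<1$, uniform in $x$) per surplus edge therefore beats the target once $n_H$ passes a threshold which — because the required saving and the per-edge gain are comparable in $x$ — does not depend on $x$. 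Setting $\mathcal{F}:=\{H:\ H\text{ is }4\text{-critical},\ 6\leq n_H<N_0\}$, a finite family of connected $4$-chromatic graphs, we conclude: if $\pi(H,x)<(x)_{\downarrow 4}(x-1)^{\,n_H-4}$ for all $H\in\mathcal{F}$ and all integers $x\geq 4$, then $\pi(H,x)\leq(x)_{\downarrow 4}(x-1)^{\,n_H-4}$ for all $4$-critical $H$ (equality only at $K_4$), and hence the conjecture holds for $k=4$.

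\emph{Main obstacle.} The genuinely hard part is the large-order estimate: an effective, uniform-in-$x$ bound showing that a large $4$-critical graph has fewer than $(x)_{\downarrow 4}(x-1)^{\,n_H-4}$ colourings. Naive bounds fall short — deleting a vertex and invoking Tomescu's Theorem~\ref{3chromtomes} on the connected $3$-chromatic graph $H-v$ only gives $\pi(H,x)\leq(x-1)\,\pi(H-v,x)\leq(x-1)^{\,n_H}$, which \emph{exceeds} the target by a constant factor — so one must actually convert the $\Theta(n_H)$ surplus edges into a geometric gain, presumably via an induction through $4$-critical graphs (using the critical-edge identity $\pi(H,x)=\pi(H-e,x)-\pi(H/e,x)$ with a usable lower bound on $\pi(H/e,x)$, or a greedy/counting estimate exploiting $\delta(H)\geq 3$ together with $H$ being far from bipartite). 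Secondary difficulties are to make $N_0$ (hence $\mathcal{F}$) explicit and small enough to be checked, and to verify the equality bookkeeping, which rests on the chordality of $K_4$ with a tree attached.
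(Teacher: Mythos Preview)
The statement is a conjecture which the paper does not prove; what the paper establishes is the conditional reduction Theorem~\ref{main}. Your proposal also aims at such a reduction, but by a different route, and its central step is left unproved. You reduce to $4$-critical graphs and then assert that large $4$-critical graphs satisfy the strict bound because the Kostochka--Yancey edge surplus $m_H-(n_H-1)\geq(2n_H+1)/3$ yields ``a fixed multiplicative saving per surplus edge.'' You yourself label this the ``main obstacle'' and do not establish it: there is no lemma guaranteeing a uniform-in-$x$ factor $<1$ per surplus edge, and as you note, naive deletion--contraction gives no usable lower bound on $\pi(H/e,x)$. Without this, the existence of $N_0$---and hence of your finite family $\mathcal{F}$---is not shown; your proposal is a programme, not a proof of the reduction.

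The paper fills exactly this gap, but by a different mechanism. Lemma~\ref{3connreduction} reduces to noncomplete $3$-connected (rather than merely $4$-critical) graphs, using the two-cutset structure of critical graphs (Proposition~\ref{westlobe}) together with the Complete Cutset Theorem. For large $3$-connected graphs the Oporowski--Oxley--Thomas theorem (Theorem~\ref{oxley}) forces a subgraph isomorphic to a subdivision of $W_{12}$, $V_{12}$, or $K_{3,12}$; the paper then proves by explicit chromatic-polynomial computations (Lemmas~\ref{K33son} and~\ref{cactusson}, via theta graphs, a special $K_4$-subdivision, and cactus graphs with six cycles) that any such subgraph already satisfies the strict inequality for all real $x\geq 4$, and Proposition~\ref{subgraphpropn} lifts this to the host graph. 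Planar graphs are handled by Theorem~\ref{tomesplanar}, so the residual finite family is the set of $3$-connected nonplanar $4$-chromatic graphs of order below $f(12)$. Thus the paper's large-order step is a structural-plus-computational argument, not an edge-density heuristic; your proposed mechanism would require substantial new work before it could replace it.
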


The conjecture above was later extended to all $x$-colorings with $x\geq 4$.

 \begin{conjecture}\cite[pg. 315]{dongbook} \label{tomesdongconj} Let $G$ be a graph in  $\mathcal{C}_k(n)$  where $k\geq 4$. Then for every $x\in \mathbb{N}$ with $x\geq k$
 	$$\pi(G,x)\leq (x)_{\downarrow k}(x-1)^{n-k}.$$
 	Moreover, the equality holds if and only if $G$ belongs to  $\mathcal{C}^*_k(n)$.
 \end{conjecture}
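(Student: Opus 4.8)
The plan is to attack Conjecture~\ref{tomesdongconj} (for $k=4$) by a minimal-counterexample argument whose conclusion is that a hypothetical minimal counterexample lies in an explicit \emph{finite} family $\mathcal{F}$ of connected $4$-chromatic graphs; verifying the strict inequality $\pi(G,x)<(x)_{\downarrow 4}(x-1)^{|V(G)|-4}$ for every $G\in\mathcal{F}$ -- for each such $G$ a comparison of two explicit polynomials in $x$, to be checked for all integers $x\ge 4$ -- then proves the conjecture. So the real work is a structural reduction: assuming the conjecture fails, fix a counterexample $G$ of smallest order $n$ and, among those, of smallest size, and show $n$ is bounded by an explicit constant.

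The first reductions are routine. If $G$ has a cut vertex, write $G=G_1\cup_v G_2$ with $\chi(G_1)=4$; from $\pi(G,x)=\pi(G_1,x)\pi(G_2,x)/x$, the connected-graph bound $\pi(G_2,x)\le x(x-1)^{|V(G_2)|-1}$, and minimality of $n$ for $G_1$, one obtains $\pi(G,x)\le(x)_{\downarrow 4}(x-1)^{n-4}$, with equality forcing $G_2$ a tree and $G_1\in\mathcal{C}^*_4$, hence $G\in\mathcal{C}^*_4(n)$, contradicting that $G$ is a counterexample; so $G$ is $2$-connected. For an edge $e$ we have $\pi(G-e,x)>\pi(G,x)$, so if $\chi(G-e)=4$ then $G-e$ is a connected $4$-chromatic graph on $n$ vertices with fewer edges, hence not a counterexample, forcing $\pi(G,x)<(x)_{\downarrow 4}(x-1)^{n-4}$; thus $\chi(G-e)=3$ for every $e$, i.e.\ $G$ is $4$-critical. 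Consequently $G$ has minimum degree at least $3$, is vertex-critical, and -- since $K_4$ is the only $4$-critical graph containing a $K_4$ and $K_4\in\mathcal{C}^*_4(4)$ -- satisfies $n\ge 5$ and $\omega(G)\le 3$. If moreover $\{u,v\}$ is a $2$-cut then $uv\notin E(G)$, and by Dirac's theorem on $2$-separators in critical graphs one may arrange that $G_1+uv$ and $G_2/uv$ are $4$-critical (hence strictly smaller, so bounded by the conjectured value) while $G_1/uv$ and $G_2+uv$ are connected and $3$-chromatic; substituting into the two-terminal decomposition
\[
\pi(G,x)=\frac{\pi(G_1/uv,x)\,\pi(G_2/uv,x)}{x}+\frac{\pi(G_1+uv,x)\,\pi(G_2+uv,x)}{x(x-1)}
\]
and bounding the $3$-chromatic factors by Theorem~\ref{3chromtomes} yields $\pi(G,x)<(x)_{\downarrow 4}(x-1)^{n-4}$ strictly -- a contradiction. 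Hence $G$ may be assumed $3$-connected.

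The heart of the matter is the $3$-connected case. Here I would use that a $4$-chromatic graph contains a subdivision of $K_4$ (Dirac): fix one with the fewest edges, so its six branch paths are induced in $G$, and note that between two branch vertices such a subdivision already exhibits a $\theta$-subgraph (three internally disjoint paths). The target is a \emph{compression lemma}: if $G$ contains a long induced path $v_1v_2\cdots v_j$ whose interior vertices -- necessarily of degree $3$, as $G$ is $4$-critical -- send their off-path edges in a coherent pattern (the model case being a long arc of a wheel, all of whose interior vertices go to one fixed vertex), then deleting two consecutive interior vertices together with their incident edges and re-linking their former path-neighbours is a parity-preserving shortening that leaves $G$ connected and $4$-chromatic and satisfies $\pi(G,x)\le(x-1)^2\pi(G',x)$ for every $x\ge 4$; since $\pi(G',x)\le(x)_{\downarrow 4}(x-1)^{(n-2)-4}$ by minimality, multiplying back contradicts that $G$ is a counterexample, the equality case returning $G\in\mathcal{C}^*_4(n)$. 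One checks this directly on wheels, where the shortening $W_{2k+1}\mapsto W_{2k-1}$ has ratio $\big((x-2)^{2k}-1\big)/\big((x-2)^{2k-2}-1\big)\le(x-1)^2$, and the general estimate passes through the path transfer factors (the numbers of proper colourings of a path's interior with its two ends receiving equal, resp.\ different, colours); the $\theta$-subgraph picture is what locates a segment whose removal keeps the graph $3$-connected. Once $G$ admits no such compressible configuration, every such arc has bounded length, and one aims to conclude that the $3$-connected $4$-critical graph $G$ itself has bounded order -- using Gallai's description of the degree-$3$ subgraph of a $4$-critical graph, the Kostochka--Yancey edge bound, and a direct estimate excluding very high degrees (which would push $\pi(G,x)$ below the target). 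This caps $n$, and $\mathcal{F}$ is taken to be the finite set of connected $4$-chromatic graphs of order at most this cap that do not lie in $\mathcal{C}^*_4$, enlarged by the small graphs produced along the way by the connectivity reductions.

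The main obstacle I anticipate is making the compression lemma \emph{uniform}: the exact transfer-factor identity shows that $\pi(G,x)\le(x-1)^2\pi(G',x)$ can reverse between the colour classes in which the two endpoints of the compressed path receive equal versus different colours, so one must either bound the ratio of these two classes (exploiting $3$-connectivity to keep the endpoints from being too strongly correlated) or vary the length removed. Equally delicate -- and, I expect, the most laborious part -- is proving that after all available compressions the residual $3$-connected $4$-critical graph has \emph{explicitly} bounded order; it is exactly this effective bound that makes $\mathcal{F}$ computable, and obtaining it forces one to combine the degree estimate with a careful use of the fine structure theory of $4$-critical graphs.
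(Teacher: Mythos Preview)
Your reduction to the $3$-connected case is essentially the paper's Lemma~\ref{3connreduction}, phrased as a minimal counterexample rather than edge induction; the ingredients---cut-vertex split, $4$-criticality, Dirac's $2$-separator lemma, and Tomescu's $3$-chromatic bound (Theorem~\ref{3chromtomes})---are identical.

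The gap is in the $3$-connected case. Your plan is to bound the order via a compression lemma on long induced paths, but this lemma is not established: you yourself note that the inequality $\pi(G,x)\le(x-1)^2\pi(G',x)$ can reverse between the two endpoint colour classes, and you have no mechanism to control their ratio. Even granting a working compression, you do not actually bound the order of a $3$-connected $4$-critical graph with no compressible configuration; invoking ``Gallai's low-degree structure, Kostochka--Yancey, and a high-degree estimate'' is a wish list, not an argument---$3$-connected $4$-critical graphs can be arbitrarily large, and none of those tools gives an order bound from the mere absence of long compressible arcs. (The paper's concluding remarks also caution that a $K_4$-subdivision by itself cannot force the inequality: there are $SK_4^{s_1,s_2,s_3}$ with $\pi>(x)_{\downarrow 4}(x-1)^{n-4}$ for all real $x>2$.)

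The paper handles the $3$-connected case by a completely different route that sidesteps compression entirely. It invokes the Oporowski--Oxley--Thomas theorem (Theorem~\ref{oxley}): for $t=12$ there is an $N=f(12)$ such that every $3$-connected graph on $\ge N$ vertices contains a subdivision of $W_{12}$, $V_{12}$, or $K_{3,12}$. A subdivision of $W_{12}$ or $V_{12}$ contains a cactus with six cycles, and the paper proves directly that any cactus with six cycles (Lemma~\ref{cactusson}) and any subdivision of $K_{3,10}$ (Lemma~\ref{K33son}) satisfy the strict bound for all real $x\ge 4$; these subgraph bounds transfer to $G$ by Proposition~\ref{subgraphpropn}. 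Hence the finite family $\mathcal{F}$ is simply all $3$-connected nonplanar $4$-chromatic graphs of order $<N$ (planar graphs being covered by Theorem~\ref{tomesplanar}), and the finiteness comes for free from an external structural theorem rather than from any ad hoc shortening.
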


Several authors have studied Conjecture~\ref{tomesdongconj}. In \cite{tomescu}, Conjecture~\ref{tomesdongconj} was proven for $k=4$ under the additional condition that graphs are planar:

\begin{theorem}\cite{tomescu}\label{tomesplanar}
	If $G$ is a planar graph in $\mathcal{C}_4(n)$ then
	$$\pi(G,x)\leq (x)_{\downarrow 4}(x-1)^{n-4}$$
	for every integer $x\geq 4$ and furthermore equality holds if and only if $G$ belongs to  $\mathcal{C}^*_4(n)$.
\end{theorem}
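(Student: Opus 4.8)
\noindent\emph{Proof idea.} The plan is to argue by induction on $n$, with a secondary induction on $|E(G)|$ for fixed $n$, peeling off the easy structural cases before attacking the core case of a planar $4$-critical graph. The base case is $n=4$: the only connected $4$-chromatic graph of order $4$ is $K_4$, and $\pi(K_4,x)=(x)_{\downarrow 4}=(x)_{\downarrow 4}(x-1)^{0}$ with $K_4\in\mathcal{C}^*_4(4)$, so the statement holds with equality exactly as asserted. For the inductive step, suppose first that $G$ has a cut vertex, so that $G=G_1\cup G_2$ with $V(G_1)\cap V(G_2)=\{v\}$, $|V(G_i)|=n_i\geq 2$, $n_1+n_2=n+1$, and, after relabelling, $\chi(G_1)=4$. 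Both $G_i$ are connected and planar, $n_1<n$, and $\pi(G,x)=\pi(G_1,x)\pi(G_2,x)/x$. By the inductive hypothesis $\pi(G_1,x)\leq (x)_{\downarrow 4}(x-1)^{n_1-4}$, and by the classical bound for connected graphs $\pi(G_2,x)\leq x(x-1)^{n_2-1}$; multiplying and dividing by $x$ yields $\pi(G,x)\leq (x)_{\downarrow 4}(x-1)^{n-4}$, and tracking the equality cases ($G_1\in\mathcal{C}^*_4(n_1)$ and $G_2$ a tree) shows that equality forces $G\in\mathcal{C}^*_4(n)$. Hence we may assume $G$ is $2$-connected.

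Next I would reduce to the $4$-critical case. If some edge $e$ of $G$ satisfies $\chi(G-e)=4$, then $G-e$ is connected (as $G$ is $2$-connected), planar, $4$-chromatic and has one fewer edge, while $G/e$ is a planar minor of $G$, hence $4$-colourable, so $\pi(G/e,x)\geq 1$ for every integer $x\geq 4$. Deletion--contraction then gives $\pi(G,x)=\pi(G-e,x)-\pi(G/e,x)<\pi(G-e,x)\leq (x)_{\downarrow 4}(x-1)^{n-4}$ by the inductive hypothesis on the number of edges, so the inequality for $G$ is strict. We may therefore assume $\chi(G-e)\leq 3$ for every edge $e$, i.e.\ that $G$ is $4$-critical; then $G$ is $2$-connected with minimum degree at least $3$, so $|E(G)|\geq 3n/2>n+2$ once $n\geq 5$. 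In particular $G\notin\mathcal{C}^*_4(n)$, and it suffices to prove the \emph{strict} inequality in this case. Being planar, $G$ also has a vertex $v$ with $3\leq\deg(v)\leq 5$, and criticality makes $G-v$ connected, planar, with $\chi(G-v)=3$ (it is $3$-colourable, and not bipartite, as otherwise $\chi(G)\leq 3$); moreover, by Gr\"otzsch's theorem $G$ contains a triangle.

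The remaining case -- a $2$-connected planar $4$-critical graph $G$ on $n\geq 5$ vertices -- is where the real work lies. The natural attempt is to bound $\pi(G,x)$ by deleting the low-degree vertex $v$, or by a short sequence of edge deletions and contractions around $v$ or around a triangle, applying Theorem~\ref{3chromtomes} to the $3$-chromatic graphs that appear and the inductive hypothesis to any $4$-chromatic ones, and then verifying the resulting arithmetic. \textbf{The principal obstacle} is that Theorem~\ref{3chromtomes}, used as a black box, is too lossy here: its extremal graphs are extremely sparse, so it only gives $\pi(G-v,x)\leq(x-1)^{n-1}$, and $(x-1)^{n-1}$ already exceeds the target $(x)_{\downarrow 4}(x-1)^{n-4}$ for large $n$. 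One must therefore exploit that the auxiliary $3$-chromatic graphs arising here are \emph{far} from the sparse extremal configurations -- they inherit density from $G$ (minimum degree $\geq 2$, and $|E(G)|\geq 3n/2$) and the $4$-criticality of $G$ forces ``rainbow'' behaviour on $N(v)$ in every proper $3$-colouring -- so the plan is either to prove a sharpened, density-aware refinement of the $3$-chromatic bound tailored to these graphs, or to choose the reduction so that the pieces produced remain $4$-chromatic and fall under the inductive hypothesis, using Euler's formula and the presence of triangles to locate a vertex or small configuration on which such a reduction applies. Once the strict inequality is established in this case, one revisits the equality conditions in each of the earlier cases and assembles them to conclude that equality throughout $\mathcal{C}_4(n)$ characterises exactly the graphs of $\mathcal{C}^*_4(n)$, closing the induction.
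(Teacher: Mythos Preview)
This theorem is not proved in the present paper at all: it is quoted from Tomescu's 1990 paper and used as a black box in the proof of Theorem~\ref{main}. So there is no ``paper's own proof'' to compare against here.

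That said, your proposal is not a proof but an outline with an explicitly acknowledged gap. The reductions to the $2$-connected and then $4$-critical cases are correct and standard, and your handling of the cut-vertex case and the non-critical case via deletion--contraction is fine. But the entire content of the theorem lies in the case you label ``where the real work lies,'' and there you do not actually prove anything: you observe (correctly) that the bound of Theorem~\ref{3chromtomes} applied to $G-v$ is too weak, and then list two possible strategies---a sharpened $3$-chromatic bound, or a reduction that stays within $4$-chromatic graphs---without carrying out either one. Neither strategy is routine. A ``density-aware refinement'' of Theorem~\ref{3chromtomes} strong enough to close the gap would itself be a nontrivial result, and you give no statement, let alone a proof, of such a refinement. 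The alternative of finding a local configuration around a low-degree vertex or triangle on which deletion--contraction keeps all pieces $4$-chromatic is exactly the kind of case analysis that makes planar-graph arguments delicate, and you have not identified any configuration that works.

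In short: the easy structural reductions are fine, but the heart of the theorem---the strict inequality for $2$-connected planar $4$-critical graphs on $n\geq 5$ vertices---is simply asserted to be doable, not done. If you want to complete this, you will need to consult Tomescu's original argument in \cite{tomescu} or supply a genuine proof of the critical case.
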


Also, in \cite{brownerey}  Conjecture~\ref{tomesdongconj}  was proven for every $k\geq 4$, provided that $x\geq n-2+\left( {n\choose 2}-{k\choose 2}-n+k \right) ^2$, and in \cite{ereyjoc} it was proven for every $k\geq 4$ under the additional condition that independence number of the graphs is at most $2$. In this article, our main result is Theorem~\ref{main} which reduces this conjecture (for $k=4$) to a \textit{finite} family of $4$-chromatic graphs.

\begin{figure}[h]\label{extremalgraphspicture}
	\begin{center}
			\includegraphics[width=0.8\textwidth]{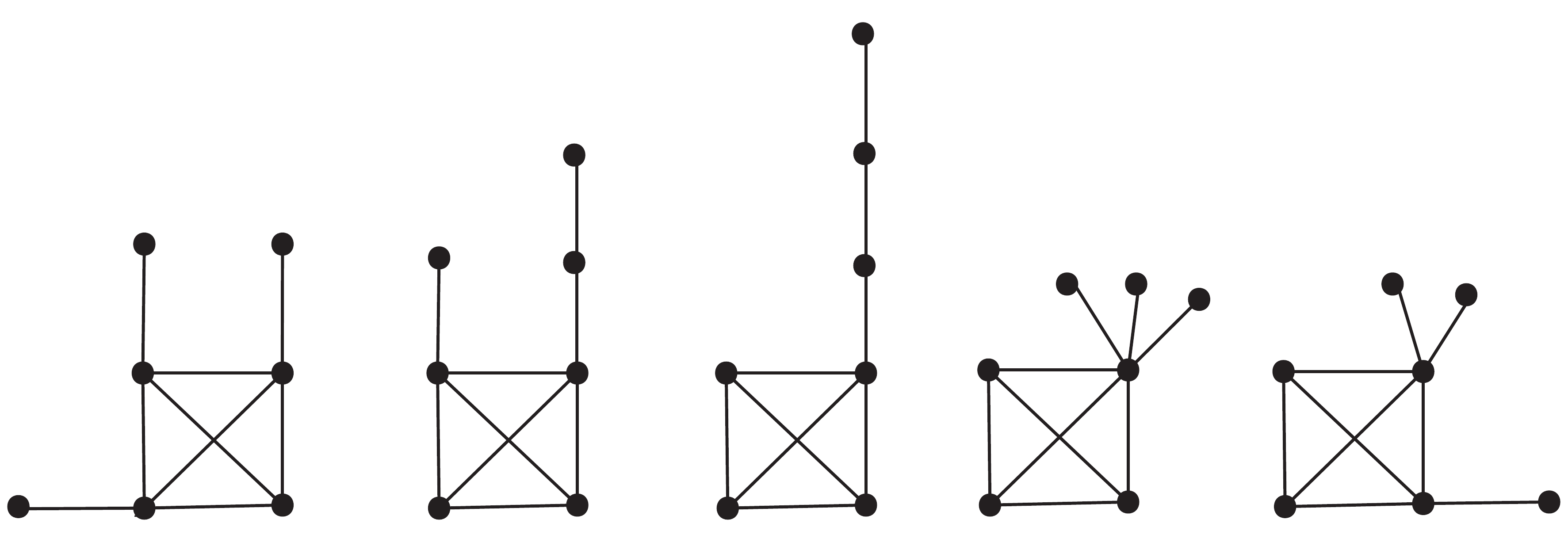}
	\end{center}
	\caption{The graphs in the family $\mathcal{C}^*_4(7)$.}
\end{figure}

\section{Terminology and background}

Let  $V(G)$ and $E(G)$ be the vertex set and edge set of a (finite, undirected) graph $G$, respectively. The {\em order} of $G$ is $|V(G)|$  and the {\em size} of $G$ is $|E(G)|$. For a nonnegative integer $x$, a (proper) \textit{$x$-coloring} of $G$ is a function $f:V(G)\rightarrow \{1,\dots , x\}$ such that $f(u)\neq f(v)$ for every $uv\in E(G)$. The \textit{chromatic number} $\chi(G)$ is smallest $x$ for which $G$ has an $x$-coloring and $G$ is called \textit{k-chromatic} if $\chi(G)=k$. Let $\pi(G,x)$ denote the {\em chromatic polynomial of G}. For nonnegative integers $x$, the polynomial $\pi(G,x)$ counts the number of $x$-colorings of $G$.

Let $G+e$ be the graph obtained from $G$ by adding an edge $e$ and $G / e$ be the graph formed from $G$ by {\it contracting} edge $e$. For $e\notin E(G)$, observe that 
\begin{center}
	$\chi(G)=\operatorname{min}\{\chi(G+e)\, , \, \chi(G/ e)\}$
\end{center}
and
\begin{equation}\label{contaddremark}
	|\chi(G+e)-\chi(G/e)|\leq 1.
\end{equation}

The well known  \textit{Addition-Contraction Formula} says that   $$\pi(G,x)=\pi(G+e,x)+\pi(G/ e,x).$$

A graph $G$ is called the $r$-\textit{clique sum} of $G_1,G_2,\dots ,G_n$ if $G=G_1\cup G_2\cup \cdots G_n$ and $G_1\cap G_2\cap \cdots G_n$ induces a complete graph $K_r$ in $G$ (see Figure~\ref{cliquesumpicture}). In this case the {\em Complete Cutset Theorem} says that
$$\pi(G,x)=\frac{\prod\limits_{i=1}^n\, \pi(G_i,x)}{\left((x)_{\downarrow r}\right)^{n-1}}.$$ 

\begin{figure}[h]\label{cliquesumpicture}
	\begin{center}
			\includegraphics[width=0.25\textwidth]{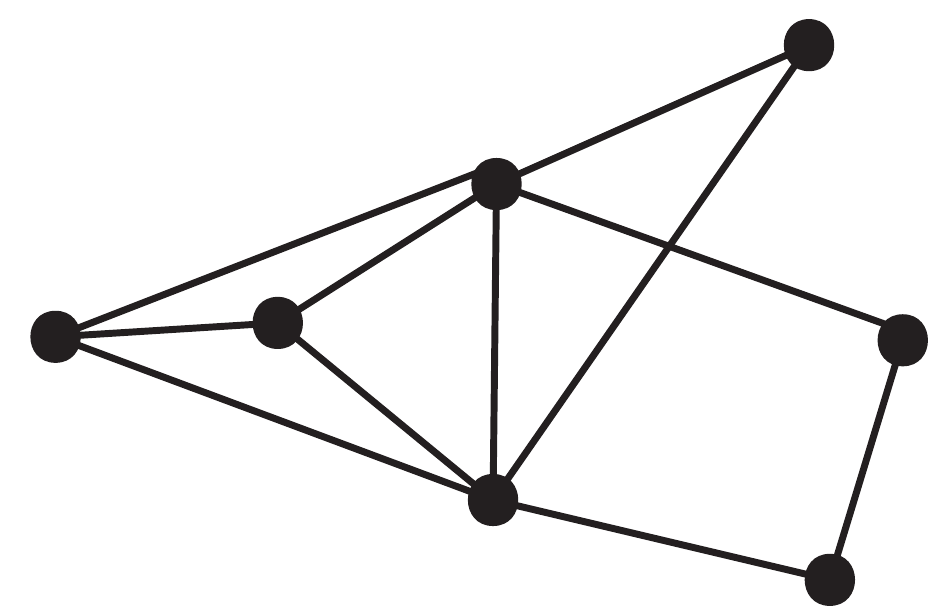}
	\end{center}
	\caption{The $2$-clique sum of $C_3$, $C_4$, $K_4$.}
\end{figure}

A subset $S$ of the vertices of a graph $G$ is called a \textit{cutset} of $G$ if $G-S$ has more than one component. A connected graph is called $k$-\textit{connected} if there does not exist a set of $k-1$ vertices whose removal disconnects the graph. A \textit{block} of a graph $G$ is a maximal $2$-connected subgraph of $G$. 
A connected graph $G$ is called a \textit{cactus graph} if every block of $G$ is either an edge or a cycle. If $B_1,\dots ,B_n$ be the blocks of a connected graph $G$ then by the Complete Cutset Theorem, 
\begin{equation}\label{blocks}
\pi(G,x)=\frac{1}{x^{n-1}}\prod\limits_{i=1}^{n} \pi(B_i,x)
\end{equation}

The chromatic polynomial of a cycle graph $C_n$ is given by
$$\pi(C_n,x)=(x-1)^n+(-1)^n(x-1).$$

A graph $G'$ is called a \textit{subdivision} of $G$ if $G'$ is obtained from $G$ by replacing edges of $G$ with paths whose endpoints are the vertices of the edges. Let $K_{p,q}$ denote the complete bipartite graph with partitions of size $p$ and $q$.  The $t$-\textit{spoke wheel}, denoted by $W_t$, has vertices $v_0,v_1,\dots ,v_t$ where $v_1,v_2,\dots ,v_t$ form a cycle, and $v_0$ is adjacent to all of $v_1,v_2,\dots ,v_t$. Let $V_t$ denote the graph whose vertex set is $\{u_1,u_2,\dots ,u_t, v_2,\dots ,v_{t-1}\}$ and edge set is 
 $$\{u_iu_{i+1}\}_{i=1}^{t-1}\cup \, \{v_iv_{i+1}\}_{i=2}^{t-2} \cup \, \{u_iv_{i}\}_{i=2}^{t-1} \cup  \{u_1v_2,\, u_tv_{t-1},\, u_1u_t\}$$
see Figure~\ref{laddertypepicture}.

\begin{proposition}\label{subgraphpropn}
	If $H$ is a connected subgraph of a connected graph $G$, then for all $x\in \mathbb{N}$, $$\pi(G,x)\leq \pi(H,x)(x-1)^{|V(G)|-|V(H)|}.$$ 
\end{proposition}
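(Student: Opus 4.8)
The plan is to build up the graph $G$ from the subgraph $H$ one vertex at a time, and to control how each step changes the number of colorings. Since both $G$ and $H$ are connected, I can order the vertices of $V(G)\setminus V(H)$ as $w_1,w_2,\dots,w_m$ (where $m=|V(G)|-|V(H)|$) so that, letting $H_0=H$ and $H_j=G[V(H)\cup\{w_1,\dots,w_j\}]$, each $H_j$ is connected and $w_j$ has at least one neighbor in $H_{j-1}$. Such an ordering exists: repeatedly peel off from $G$ a vertex outside $H$ whose removal keeps the graph connected (for instance a non-cut vertex of $G$ that does not lie in $H$, which exists as long as $V(G)\setminus V(H)\neq\emptyset$ because a connected graph on at least two vertices has at least two non-cut vertices, at most one of which can be forced to stay), and reverse the resulting order.

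The key step is the single-vertex estimate: if $H'$ is a connected subgraph of a connected graph $G'$ with $V(G')=V(H')\cup\{w\}$ and $w$ has a neighbor in $H'$, then $\pi(G',x)\le \pi(H',x)\,(x-1)$. To see this, note that any $x$-coloring of $G'$ restricts to an $x$-coloring of $H'$, and conversely each $x$-coloring of $H'$ extends to $w$ in at most $x-1$ ways, since $w$ has at least one colored neighbor, so at least one color is forbidden. Hence $\pi(G',x)=\sum_{c}(\text{extensions of }c)\le \pi(H',x)(x-1)$, where the sum runs over $x$-colorings $c$ of $H'$.

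Applying this estimate along the chain $H=H_0\subseteq H_1\subseteq\cdots\subseteq H_m=G$ gives
$$\pi(G,x)=\pi(H_m,x)\le \pi(H_{m-1},x)(x-1)\le\cdots\le \pi(H_0,x)(x-1)^m=\pi(H,x)(x-1)^{|V(G)|-|V(H)|},$$
which is the claimed inequality. The only real obstacle is producing the vertex ordering in which every intermediate graph stays connected and the newly added vertex has a back-neighbor; this is the standard fact that the vertices of a connected graph can be enumerated so that every prefix induces a connected subgraph (equivalently, an ordering obtained by reversing a sequence of non-cut-vertex deletions, or by a BFS/DFS from $H$), and it is easy to arrange that the first $|V(H)|$ vertices are exactly $V(H)$ since $H$ itself is connected. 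Everything else is the elementary counting argument above.
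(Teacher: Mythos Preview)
Your proof is correct. The single-vertex estimate is sound even though $H_0=H$ need not be an \emph{induced} subgraph of $H_1$: every $x$-coloring of $H_1$ restricts to a coloring of $H_0$, and for each such restriction there are at most $x-1$ choices at $w_1$, which is all you use. The existence of the ordering is standard via BFS/DFS from $V(H)$; your non-cut-vertex deletion sketch is a little loose (it is not true that at most one non-cut vertex of $G$ can lie in $V(H)$ --- the clean version contracts $V(H)$ to a single vertex and then picks a non-cut vertex of the contracted graph different from that vertex), but since you also invoke the BFS/DFS construction, the argument stands.

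The paper takes a different, shorter route: it passes to a \emph{minimal} connected spanning subgraph $G'$ of $G$ containing $H$. In such a $G'$ every edge outside $H$ is a bridge, so $G'$ is $H$ with trees attached, and the Complete Cutset Theorem gives the exact equality $\pi(G',x)=\pi(H,x)(x-1)^{|V(G)|-|V(H)|}$; the inequality then follows from $\pi(G,x)\le\pi(G',x)$. In effect, both arguments extend $H$ to a spanning graph by a tree; the paper does this in one step and invokes a structural formula, whereas you do it vertex-by-vertex with an elementary counting bound. Your version avoids appealing to the Complete Cutset Theorem at the cost of a small induction; the paper's version is one line once that theorem is available.
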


\begin{proof}
	Let $G'$ be a minimal connected spanning subgraph of $G$ which contains $H$. Then, by the Complete Cutset Theorem, $\pi(G',x)=\pi(H,x)(x-1)^{|V(G)|-|V(H)|}$. Every $x$-coloring of $G$ is an $x$-coloring of $G'$. Hence, $\pi(G',x)\geq \pi(G,x)$. Thus the result follows.
\end{proof}

\begin{proposition}\label{general_k_clique_lemma}\cite{ereyjoc}
	Let $G\in \mathcal{C}_k(n)$ and $\omega(G)=k$. Then for all $x\in \mathbb{N}$ with $x\geq k$, $$\pi(G,x)\leq (x)_{\downarrow k}\, (x-1)^{n-k}$$ with equality if and only if $G\in  \mathcal{C}_k^*(n) $.
\end{proposition}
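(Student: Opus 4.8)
The plan is to obtain the inequality directly from Proposition~\ref{subgraphpropn} and then settle the equality case with a minimal connected spanning subgraph argument together with the Addition-Contraction Formula. For the inequality: since $\omega(G)=k$, the graph $G$ contains a clique $Q\cong K_k$, a connected subgraph on $k$ vertices with $\pi(Q,x)=(x)_{\downarrow k}$, so Proposition~\ref{subgraphpropn} with $H=Q$ gives $\pi(G,x)\le\pi(Q,x)(x-1)^{n-k}=(x)_{\downarrow k}(x-1)^{n-k}$ for all $x\in\mathbb{N}$. All of the remaining work is in the equality statement.

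The ``if'' direction rests on the elementary fact that a connected graph $H$ on $n$ vertices with $\omega(H)\ge k$ has at least $\binom{k}{2}+n-k$ edges, with equality precisely when its block decomposition consists of one block equal to $K_k$ and $n-k$ blocks each equal to a single edge --- the proof being an edge count, using that a maximum clique lies inside one block, so the sum over the blocks of the cyclomatic numbers $|E(B_i)|-|V(B_i)|+1$ is at least $\binom{k-1}{2}$, and equality in $|E(H)|=\binom{k}{2}+n-k$ forces this bound to be tight. For any such $H$, formula \eqref{blocks} gives
\[ \pi(H,x)=\frac{1}{x^{\,n-k}}\,\pi(K_k,x)\bigl(\pi(K_2,x)\bigr)^{n-k}=\frac{1}{x^{\,n-k}}\,(x)_{\downarrow k}\bigl(x(x-1)\bigr)^{n-k}=(x)_{\downarrow k}(x-1)^{n-k}, \]
and since every $G\in\mathcal{C}_k^*(n)$ meets these hypotheses, this is the ``easy to see'' remark from the introduction.

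For the ``only if'' direction I would argue the contrapositive. Assume $G\notin\mathcal{C}_k^*(n)$; fix a maximum clique $Q\cong K_k$ of $G$ and let $G'$ be a minimal connected spanning subgraph of $G$ containing $Q$ (the same graph used in the proof of Proposition~\ref{subgraphpropn}). By minimality every edge of $G'$ not in $Q$ is a bridge of $G'$, so $G'$ has exactly the block structure above; hence $G'\in\mathcal{C}_k^*(n)$ and $\pi(G',x)=(x)_{\downarrow k}(x-1)^{n-k}$. Since $G'\neq G$, choose $e\in E(G)\setminus E(G')$; then $G'+e$ is a spanning subgraph of $G$, so $\pi(G,x)\le\pi(G'+e,x)=\pi(G',x)-\pi(G'/e,x)$ by the Addition-Contraction Formula. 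As $\pi(G'/e,x)$ is a monic polynomial of degree $n-1$ it is positive for all large integers $x$, so $\pi(G,x)<\pi(G',x)=(x)_{\downarrow k}(x-1)^{n-k}$ for all large $x$ and equality cannot hold identically. (For $k\ge3$ this sharpens to strict inequality for every $x\ge k$: $G'/e$ still contains $Q$ so $\chi(G'/e)\ge k$, while the bridge structure of $G'$ easily yields a $k$-coloring of $G'$ assigning the two ends of $e$ the same color, so $\chi(G'/e)=k$ and $\pi(G'/e,x)\ge1$ for every $x\ge k$; compare \eqref{contaddremark}.)

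The main obstacle is the structural claim underpinning both directions --- that a connected $n$-vertex graph with $\omega\ge k$ and the minimum possible $\binom{k}{2}+n-k$ edges must be a copy of $K_k$ with a ``bridge forest'' attached, equivalently that the $G'$ above lies in $\mathcal{C}_k^*(n)$ --- together with the ensuing evaluation of its chromatic polynomial via \eqref{blocks}. Everything else (the inequality itself, and the edge-addition step closing the equality case) is a short and direct application of the tools already in the excerpt.
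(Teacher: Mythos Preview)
The paper does not prove this proposition here; it is cited from \cite{ereyjoc} as a known result, so there is no in-paper argument to compare against. Your proof is correct. The inequality is an immediate application of Proposition~\ref{subgraphpropn} with $H=K_k$, and your treatment of the equality case via the minimal connected spanning subgraph $G'\supseteq K_k$ is sound. The structural fact that $G'$ (equivalently, any graph in $\mathcal{C}_k^*(n)$) has one $K_k$ block and $n-k$ bridge blocks follows from your cyclomatic-number count once one notes that a $2$-connected block $B$ with $K_k\subsetneq B$ must have cyclomatic number strictly exceeding $\binom{k-1}{2}$: if $|V(B)|=k+m$ with $m\ge 1$ and $|E(B)|-|V(B)|+1=\binom{k-1}{2}$, then there are exactly $m$ edges outside the copy of $K_k$, yet the $m$ extra vertices each have degree $\ge 2$ in $B$, so an endpoint count forces all $2m$ endpoints of those edges to lie among the extra vertices, disconnecting $B$. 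The Addition--Contraction step then gives strict inequality; your parenthetical that $\chi(G'/e)=k$ for $k\ge 3$ (hence $\pi(G'/e,x)\ge 1$ for every integer $x\ge k$) is also correct, since at most one endpoint of $e$ can lie in the clique $Q$ and, with at least three colours available, the bridge forest hanging off $Q$ can always be properly coloured so that the two endpoints of $e$ receive the same colour.
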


\section{Proof of the main result}

To prove our main result, we need the following three lemmas whose proofs are provided in Section $4$.

\begin{lemma}\label{3connreduction} Let $x\in \mathbb{N}$ be such that $x\geq 4$. Suppose that for every noncomplete $3$-connected $4$-chromatic graph $H$, the inequality $\pi(H,x)<(x)_{\downarrow 4}(x-1)^{|V(H)|-4}$ holds. Then, for every connected $4$-chromatic graph $G$ the inequality $\pi(G,x)\leq (x)_{\downarrow 4}(x-1)^{|V(G)|-4}$ holds with equality if and only if $G\in \mathcal{C}_4^*(|V(G)|)$.
\end{lemma}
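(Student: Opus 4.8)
The plan is to prove the contrapositive-style reduction by induction on $|V(G)|$, peeling off the connectivity hierarchy one level at a time: from a general connected $4$-chromatic graph, reduce to a $2$-connected one, then from $2$-connected reduce to $3$-connected. Throughout, fix $x\geq 4$ and assume the hypothesis that every noncomplete $3$-connected $4$-chromatic $H$ satisfies the strict inequality $\pi(H,x)<(x)_{\downarrow 4}(x-1)^{|V(H)|-4}$; note $K_4$ itself satisfies $\pi(K_4,x)=(x)_{\downarrow 4}$ with equality, and $K_4\in\mathcal C_4^*(4)$, which will be the base case.

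First I would handle the reduction to $2$-connectivity using the block decomposition \eqref{blocks}. If $G$ has blocks $B_1,\dots,B_m$ with $m\geq 2$, then since $\chi(G)=4$ exactly one block, say $B_1$, is $4$-chromatic while all others are $2$-connected with chromatic number at most $3$ (an edge-block is $2$-chromatic). For each block $B_i$ with $i\geq 2$, Theorem \ref{3chromtomes} together with the $k=2$ bound gives $\pi(B_i,x)\leq x(x-1)^{|V(B_i)|-1}$ — more precisely, one checks $\pi(B_i,x)/x \leq (x-1)^{|V(B_i)|-1}$ in all cases ($B_i$ an edge, an even cycle, an odd cycle, or a $3$-chromatic graph bounded by Tomescu's theorem), with the inequality strict unless $B_i$ is a single edge (equivalently, unless $B_i$ is a tree-block). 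Feeding these into \eqref{blocks} and applying the inductive hypothesis to $B_1$ (which is a connected $4$-chromatic graph on fewer vertices, unless $G=B_1$ is already $2$-connected) yields $\pi(G,x)\leq (x)_{\downarrow 4}(x-1)^{|V(G)|-4}$, and tracing the equality cases shows equality forces every $B_i$ ($i\geq 2$) to be an edge and $B_1\in\mathcal C_4^*(|V(B_1)|)$, hence $G\in\mathcal C_4^*(|V(G)|)$. So it suffices to treat $2$-connected $G$.

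Next, for $2$-connected but not $3$-connected $G$, I would use a $2$-cutset $\{u,v\}$ to split $G$ into two graphs $G_1,G_2$ glued along $u,v$, distinguishing whether $uv\in E(G)$ or not. If $uv\in E(G)$, this is literally a $2$-clique sum and the Complete Cutset Theorem gives $\pi(G,x)=\pi(G_1,x)\pi(G_2,x)/(x(x-1))$; if $uv\notin E(G)$, I would instead apply the addition–contraction formula on the non-edge $uv$, or equivalently compare $G$ to $G+uv$ and $G/uv$, using \eqref{contaddremark} to control chromatic numbers. In either case, exactly one of the two sides carries the obligation of being $4$-chromatic while the other has chromatic number $\leq 3$ (or becomes $K_2$/a smaller piece after adding the edge), so Tomescu's theorem and Proposition \ref{subgraphpropn} bound the non-$4$-chromatic piece by the appropriate power of $(x-1)$ with strictness except in degenerate cases, and the inductive hypothesis bounds the $4$-chromatic piece. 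Careful bookkeeping of the vertex counts and of when each sub-bound is tight again forces $G\in\mathcal C_4^*$ at equality. The end result is that any $2$-connected $4$-chromatic $G$ which is not itself $3$-connected satisfies the desired (non-strict) inequality with the stated equality characterization. Combined with the hypothesis on $3$-connected $H$ — which handles exactly the remaining case, giving strict inequality there since a $3$-connected $4$-chromatic graph on $4$ vertices is $K_4$ (equality, and $K_4\in\mathcal C_4^*(4)$) while all larger ones are noncomplete — this completes the induction.

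The main obstacle I anticipate is the $2$-cutset step when $uv\notin E(G)$: one must ensure that after splitting, the pieces $G_i$ (or $G_i+uv$, or $G_i/uv$) are again \emph{connected} $4$-chromatic graphs of strictly smaller order so the induction applies, and one must avoid the trap where neither side is $4$-chromatic (which can happen — then $\chi(G)\leq 3$, contradicting $G\in\mathcal C_4(n)$, so this case is actually vacuous, but that needs to be argued) or where both sides could be made $4$-chromatic in incompatible ways. Getting a clean, uniform accounting of the $(x-1)$-exponents across the edge/non-edge cases, and pinning down precisely which configurations realize equality so as to conclude membership in $\mathcal C_4^*(n)$ rather than merely the numerical bound, is where the real care is needed; the $3$-chromatic and $2$-chromatic bounds themselves are off-the-shelf from Theorem \ref{3chromtomes} and are not the difficulty.
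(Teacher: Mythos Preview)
Your broad strategy---induct and peel off connectivity levels---matches the paper's, but the execution has a genuine gap in the $2$-cutset step, and the gap stems from a missing idea that the paper uses essentially.

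First, two small errors. In the block step you assert ``exactly one block is $4$-chromatic''; this is false (glue two copies of $K_4$ at a vertex), though harmless since ``at least one'' suffices. More seriously, in the $2$-cutset step you write that the case where neither lobe $G_1,G_2$ is $4$-chromatic ``is actually vacuous'' since then $\chi(G)\le 3$. That is false: if every $3$-coloring of $G_1$ forces $c(u)\neq c(v)$ while every $3$-coloring of $G_2$ forces $c(u)=c(v)$, then both $G_i$ are $3$-chromatic yet $G$ is $4$-chromatic. So this case is live and must be handled.

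The substantive gap is this. When $uv\notin E(G)$ you want to bound $\pi(G+uv,x)=\pi(G_1+uv,x)\pi(G_2+uv,x)/(x(x-1))$ and $\pi(G/uv,x)=\pi(G_1/uv,x)\pi(G_2/uv,x)/x$. To make the arithmetic close, on each side you need one factor bounded by $(x)_{\downarrow 4}(x-1)^{n_i-4}$ and the other by the $3$-chromatic bound $(x-1)^{n_j}-(x-1)$ from Theorem~\ref{3chromtomes}. But without further structure nothing prevents, say, $G_2+uv$ from being \emph{bipartite} (take $G_2$ a path $u\text{--}w_1\text{--}w_2\text{--}v$, so $G_2+uv\cong C_4$), in which case Tomescu's bound does not apply and $\pi(G_2+uv,x)\le x(x-1)^{n_2-1}$ is too weak for the product estimate. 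You also have no control over which factor in the $G/uv$ product carries the $4$-chromatic obligation, so the two estimates need not combine coherently.

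The paper closes this by first reducing (via induction on $|E(G)|$, not $|V(G)|$) to the case where $G$ is $4$-\emph{critical}, and then invoking Proposition~\ref{westlobe}: for a $4$-critical $G$ with $2$-cutset $\{u,v\}$ one has $uv\notin E(G)$, there are exactly two lobes, and they can be labeled so that $G_1+uv$ is $4$-critical and $G_2/uv$ is $4$-critical. This pins down which factor in each product is $4$-chromatic, and via \eqref{contaddremark} forces $\chi(G_2+uv)\ge 3$ and $\chi(G_1/uv)\ge 3$, so the bipartite obstruction cannot occur and Theorem~\ref{3chromtomes} applies where needed. Your vertex induction does not support the edge-deletion step down to a critical subgraph, and your sketch offers no substitute for Proposition~\ref{westlobe}; that is the missing idea.
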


\begin{lemma}\label{K33son}
	Let $G$ be a subdivision of $K_{3,10}$ and $|V(G)|=n$. Then, 
	$$\pi(G,x)<(x)_{\downarrow 4}\,(x-1)^{n-4}$$
	for every real number $x\geq 3.95$.
\end{lemma}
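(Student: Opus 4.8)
The plan is to evaluate $\pi(G,x)$ exactly by a transfer‑matrix argument over the colourings of the three high‑degree vertices, to bound the contribution of each of the ten ``branch'' pieces by an explicit function of $x$, and thereby to reduce the lemma to one univariate inequality. Realise $K_{3,10}$ with parts $\{a_1,a_2,a_3\}$ and $\{b_1,\dots,b_{10}\}$, so that $G$ is obtained by replacing each edge $a_ib_j$ by an internally disjoint path of length $\ell_i^{(j)}\ge 1$; then $n=3+\sum_{j=1}^{10} n_j$ with $n_j=\ell_1^{(j)}+\ell_2^{(j)}+\ell_3^{(j)}-2$. Let $B_j$ be the subgraph spanned by $b_j$, the interiors of its three incident paths, and $a_1,a_2,a_3$; then $G=B_1\cup\cdots\cup B_{10}$ and $B_i\cap B_j=\{a_1,a_2,a_3\}$ for $i\ne j$. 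For a path on $\ell+1$ vertices with prescribed end‑colours, the number of proper $x$‑colourings is $u_\ell=\big((x-1)^\ell+(-1)^\ell(x-1)\big)/x$ if the ends agree and $v_\ell=\big((x-1)^\ell-(-1)^\ell\big)/x$ if they disagree; for $x\ge 2$ these satisfy $u_\ell+(x-1)v_\ell=(x-1)^\ell$ and $0\le u_\ell,v_\ell\le(x-1)^{\ell-1}$ for $\ell\ge1$. Summing over the colour of $b_j$ and over the colourings of the hubs, grouped according to whether the three hub‑colours are all equal, coincide on exactly one (of three) pairs, or are all distinct, yields
\[
\pi(G,x)=x(x-1)(x-2)\prod_{j}P_j+x(x-1)\!\Big(\prod_jQ_j^{12}+\prod_jQ_j^{13}+\prod_jQ_j^{23}\Big)+x\prod_jR_j,
\]
where $P_j,Q_j^{pq},R_j$ are the numbers of extensions of the corresponding hub‑pattern over $B_j$, each an explicit sum of products of three factors $u_\ell$ or $v_\ell$ in the leg‑lengths of $B_j$.

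Dividing by $x(x-1)^{n-3}=x\prod_j(x-1)^{n_j}$, writing $\widehat P_j=P_j/(x-1)^{n_j}$, $\widehat Q_j=Q_j/(x-1)^{n_j}$, $\widehat R_j=R_j/(x-1)^{n_j}$, and using $(x)_{\downarrow 4}(x-1)^{n-4}=x(x-1)^{n-3}(x-2)(x-3)$, the lemma becomes
\[
(x-1)(x-2)\prod_j\widehat P_j+(x-1)\sum_{\text{pairs}}\prod_j\widehat Q_j+\prod_j\widehat R_j<(x-2)(x-3).
\]
The heart of the matter is a set of uniform gadget bounds, valid for all leg‑lengths and all real $x\ge 3.95$:
\[
\widehat R_j\le 1,\qquad \widehat Q_j\le\rho(x):=\frac{x^2-3x+3}{(x-1)^2},\qquad \widehat P_j\le\delta(x):=\frac{(x-2)(x^2-3x+4)}{(x-1)^3}.
\]

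To prove these, put $\widehat u_\ell=u_\ell/(x-1)^\ell$, $\widehat v_\ell=v_\ell/(x-1)^\ell$. For $x>2$ one has $\widehat u_{\ell+1}=\widehat v_\ell$, $\widehat v_0=0$, and $\widehat v_1>\widehat v_3>\widehat v_5>\cdots>1/x>\cdots>\widehat v_4>\widehat v_2>0$, while the identity $u_\ell+(x-1)v_\ell=(x-1)^\ell$ reads $\widehat v_{\ell-1}+(x-1)\widehat v_\ell=1$. Each of $\widehat P_j,\widehat Q_j,\widehat R_j$, viewed as a function of one leg‑length with the other two fixed, has the shape $\alpha\,\widehat v_{\ell-1}+\beta\,\widehat v_\ell$ with $\alpha,\beta\ge0$; from the displayed facts about the $\widehat v_\ell$ such a function is maximised over $\ell\ge 1$ at $\ell\in\{1,2\}$ (the threshold $\beta/\alpha=x-1$ corresponding to a constant function). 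Applying this to each leg in turn, the supremum of each quantity over all leg‑lengths is attained with every leg of length $1$ or $2$; evaluating on these finitely many configurations and comparing them identifies the maximisers — all legs of length $1$ for $\widehat R_j$ (value $1$), the two coinciding legs of length $1$ and the third of length $2$ for $\widehat Q_j$ (value $\rho(x)$), one leg of length $1$ with two of length $2$ for $\widehat P_j$ (value $\delta(x)$) — giving the bounds.

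With the gadget bounds, the normalised inequality follows from
\[
(x-1)(x-2)\,\delta(x)^{10}+3(x-1)\,\rho(x)^{10}+1<(x-2)(x-3)\qquad(x\ge 3.95),
\]
a one‑variable inequality checked at $x=3.95$ and then for all $x\ge 3.95$ by estimating both sides (as $x\to\infty$, $1-\delta(x)\sim 2/x$ and $1-\rho(x)\sim 1/x$, so the left side behaves like $x^2-20x$ against $(x-2)(x-3)=x^2-5x+6$) together with a monotonicity check on the difference. I expect this last step to be the main obstacle: the margin is thin (at $x=3.95$ the left side is $\approx 1.73$, the right side $\approx 1.85$), and it is essential both to keep $\widehat P_j\le\delta$ and $\widehat Q_j\le\rho$ separate — any common upper bound is too lossy to survive near $x=3.95$ — and to have exactly ten branches rather than nine, for which the estimate fails; this is precisely why the statement uses $K_{3,10}$ and the threshold $3.95$.
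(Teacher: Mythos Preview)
Your decomposition of $\pi(G,x)$ by conditioning on the colour pattern of the three hubs is exactly the paper's five-term addition--contraction formula: their graphs $A_1^1$, $A_1^2$, $A_2$, $B_1$, $B_2$ are your $P$-product, the three $Q$-products, and the $R$-product respectively, so the two proofs share the same skeleton and both reduce the lemma to a single-variable inequality for $t=10$ branches. The substantive difference is in how the per-branch bounds are obtained. The paper writes each branch factor as an explicit $\theta$- or $SK_4$-polynomial and bounds it through a case analysis on how many legs equal~$1$; your observation that each normalised branch quantity is affine in $\hat v_\ell$ (via $\hat v_{\ell-1}+(x-1)\hat v_\ell=1$), hence maximised over leg-lengths at $\ell\in\{1,2\}$, is a cleaner mechanism. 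In fact your $\hat R_j\le 1$ and $\hat Q_j\le\rho(x)$ coincide \emph{exactly} with the paper's bounds (one checks $x^2(x^2-3x+3)=(x-1)^4+(x-1)^3+(x-1)+1$), while your $\hat P_j\le\delta(x)$ is strictly sharper than the paper's $SK_4$ bound. The only place your write-up needs tightening is the final step: rather than the informal ``asymptotics plus monotonicity'' sketch, you should clear denominators by multiplying through by $(x-1)^{30}$ to obtain a polynomial inequality and then certify that the resulting polynomial has no real root in $[3.95,\infty)$ --- precisely the device the paper uses for its version.
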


\begin{lemma}\label{cactusson} Let $G$ be a cactus graph of order $n$ which has $6$ cycles. Then,
	$$\pi(G,x)<(x)_{\downarrow 4}(x-1)^{n-4}$$
	for every real number $x\geq 3.998$.
\end{lemma}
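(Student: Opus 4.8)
The plan is to strip off the block structure of $G$ with \eqref{blocks}, fold the six cycles into one numerical estimate, and reduce to a single‑variable polynomial inequality whose only delicate feature is that it is essentially sharp at $x=4$. \textbf{Step 1 (reduce to the cycle blocks).} Since $G$ is a cactus with exactly six cycles, its blocks are six cycles $C_{m_1},\dots,C_{m_6}$ with each $m_j\ge 3$, together with some number $e\ge 0$ of bridges, i.e.\ copies of $K_2$. For a connected graph the sum over blocks of $\bigl(|V(B)|-1\bigr)$ equals $|V(G)|-1$, so $n=1+\sum_{j=1}^{6}(m_j-1)+e$ and hence $n-4=\sum_{j=1}^{6}m_j+e-9$. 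Feeding $\pi(K_2,x)=x(x-1)$ into \eqref{blocks} and cancelling powers of $x$ gives
\[
\pi(G,x)=\frac{(x-1)^{e}}{x^{5}}\prod_{j=1}^{6}\pi(C_{m_j},x).
\]

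\textbf{Step 2 (a uniform bound for each cycle).} For $x>2$ and $m\ge 3$ write $\pi(C_m,x)=(x-1)^{m-1}\bigl[(x-1)+(-1)^m(x-1)^{2-m}\bigr]$. Since $x-1>1$, the bracket is largest at $m=4$: for even $m$ it decreases in $m$, and for odd $m$ it is strictly below $x-1$. Hence
\[
\pi(C_m,x)\ \le\ (x-1)^{m-1}\cdot\frac{(x-1)^{3}+1}{(x-1)^{2}}\qquad(m\ge 3),
\]
with equality precisely when $m=4$. Note also that $\frac{(x-1)^3+1}{(x-1)^2}<x$ for $x>2$, so a single cycle block costs strictly less than a factor of $x$; this is what makes having \emph{many} cycles helpful.

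\textbf{Step 3 (collapse to one variable).} Inserting the Step 2 bound into the Step 1 identity and using $n-4=\sum m_j+e-9$, all dependence on $e$ and on $\sum m_j$ cancels, and the desired inequality $\pi(G,x)<(x)_{\downarrow 4}(x-1)^{n-4}$ follows once we prove the numerical inequality
\[
\left(\frac{(x-1)^{3}+1}{x(x-1)^{2}}\right)^{\!6}<\frac{(x-2)(x-3)}{(x-1)^{2}},
\qquad\text{equivalently}\qquad
\bigl((x-1)^{3}+1\bigr)^{6}<x^{6}(x-2)(x-3)(x-1)^{10}.
\]
The exponent $6$ is exactly what is needed: the same argument with $c$ cycles reduces to $\bigl(\tfrac{(x-1)^3+1}{x(x-1)^2}\bigr)^c<\tfrac{(x-2)(x-3)}{(x-1)^2}$, whose left side decreases in $c$ (the base is $<1$); at $x=4$ this reads $(7/9)^c<2/9$, which holds for $c=6$ but fails for $c=5$ — indeed the cactus built from five $4$‑cycles already exceeds $(x)_{\downarrow 4}(x-1)^{n-4}$ near $x=4$. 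Strictness of the conclusion is not an issue because we prove the displayed bound strictly.

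\textbf{Step 4 (the one‑variable inequality and the main obstacle).} It remains to verify the last display for all real $x\ge 3.998$; with $y=x-1$ it becomes $h(y):=(y+1)^{6}(y-1)(y-2)\,y^{10}-(y^{3}+1)^{6}>0$ for $y\ge 2.998$. This is where essentially all the difficulty lies, because the inequality is \emph{tight}: at $x=4$ one has $4^{6}\cdot 2\cdot 3^{10}=483729408$ against $28^{6}=481890304$, a gap of well under one percent, and for $x$ slightly below $3.998$ the inequality genuinely fails. I would finish by checking that the degree‑$18$ polynomial $h$ has no real zero on $[2.998,\infty)$ — for instance by Sturm's theorem, or by substituting $y=2.998+s$ and exhibiting that $h(2.998+s)$ has nonnegative value for all $s\ge 0$ via a positive‑coefficient or sum‑of‑squares certificate — together with the elementary fact that $x^{6}(x-2)(x-3)(x-1)^{10}-\bigl((x-1)^3+1\bigr)^{6}$ is asymptotic to $3x^{17}$ and hence positive for large $x$. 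Everything outside Step 4 is routine bookkeeping with \eqref{blocks} and the formula $\pi(C_m,x)=(x-1)^m+(-1)^m(x-1)$.
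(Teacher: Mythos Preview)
Your argument is correct, and it takes a cleaner route than the paper's. Both proofs start from the block decomposition \eqref{blocks} and end by checking a single-variable polynomial inequality near $x=4$, but the intermediate estimates differ. The paper crudely bounds each factor $(x-1)^{n_i-1}+(-1)^{n_i}$ by $(x-1)^{n_i-1}+1$ and then invokes a separate combinatorial lemma, $\prod_{i=1}^{p}(y^{N_i}+1)\le y^{N-3p}\bigl(y+\tfrac{1}{3y^2}\bigr)^{3p}$, which effectively contributes $\bigl(1+\tfrac{1}{3(x-1)^3}\bigr)^{3}$ per cycle. You instead observe directly that $\pi(C_m,x)/(x-1)^{m-1}$ is maximised at $m=4$, giving the sharper per-cycle factor $1+\tfrac{1}{(x-1)^3}$; Bernoulli's inequality shows this is never larger than the paper's factor, and indeed your final polynomial inequality has its largest real root near $x\approx 3.993$, just below the paper's $3.99791\ldots$. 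Your Step~3 aside about $c=5$ versus $c=6$ cycles is also a nice conceptual addition, since it explains why six is the right number. Two small remarks: the difference $h$ in Step~4 has degree $17$, not $18$ (the leading $y^{18}$ terms cancel, as you implicitly note when computing the $3x^{17}$ asymptotic); and like the paper, you leave the actual root isolation to computation, which is fine for a proposal but would need to be carried out.
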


We also make use of the following result.

\begin{theorem}\cite{oxley} \label{oxley}
For every integer $t\geq 3$, there is an integer $N=f(t)$ such that every $3$-connected graph with at least $N$ vertices contains a subgraph isomorphic to a subdivision of one of $W_t$, $V_t$, and $K_{3,t}$.
\end{theorem}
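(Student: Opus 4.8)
\emph{Proof sketch.} The plan is to argue by a dichotomy on the maximum degree of $G$, fixing a threshold $s=s(t)$ to be chosen at the end. \textbf{Case A: $G$ has a vertex $v$ with $\deg_G(v)\ge s$.} Set $H=G-v$ and $N=N_G(v)$; since $G$ is $3$-connected, $H$ is $2$-connected and $|N|\ge s$. The crux of this case is a quantitative Menger/``fan''-type lemma for a $2$-connected graph with a marked vertex set: if $s$ is large enough in terms of $t$, then either (i) $H$ has a cycle through at least $t$ vertices of $N$, or (ii) $H$ contains a subdivision of $K_{2,t}$ whose degree-$t$ side lies in $N$. One proves this by repeatedly pulling out internally disjoint paths with both ends in $N$ and threading them through the block structure of $H$: if they can be concatenated into one long cycle we land in~(i); otherwise boundedly many of them already dominate $N$, and a $K_{2,t}$-subdivision based on $N$ is squeezed out. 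In case~(i), restoring $v$ together with $t$ of its edges to that cycle gives a subdivision of $W_t$; in case~(ii), restoring $v$ on the degree-$t$ side gives a subdivision of $K_{3,t}$.

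\textbf{Case B: $\Delta(G)\le s$.} Here the goal is a subdivision of $V_t$, and I would split once more on tree-width. If $\operatorname{tw}(G)\ge g(t)$ for a suitable function $g$, then by the Excluded Grid Theorem $G$ has an arbitrarily large grid minor, hence (every planar graph is a minor of a large enough grid) a $V_t$-minor; since $V_t$ has maximum degree $3$, having $V_t$ as a minor is the same as containing a subdivision of $V_t$ as a subgraph, and we are done. If instead $\operatorname{tw}(G)\le g(t)$ while $|V(G)|$ is enormous, take a tree-decomposition of bounded width and bounded adhesion with $3$-connected torsos. Because $\Delta(G)\le s$, one may arrange the decomposition tree to have bounded degree (otherwise some bounded-size torso would, by pigeonhole, force a vertex of $G$ into many branches and hence have large degree); being huge, a bounded-degree tree has a very long path, and the corresponding chain of bounded-size torsos glued along bounded separators is a long ``generalized ladder''. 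Pigeonholing on the finitely many isomorphism types of a window of consecutive links makes this ladder periodic, and a long periodic ladder contains a subdivision of $V_t$.

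\textbf{Assembling $f(t)$, and the main obstacle.} It then remains to compose the parameters --- the threshold $s(t)$ of Case A together with the Ramsey/fan bounds used there, the grid size and hence the tree-width bound $g(t)$ of Case B, and the window size and ladder length of the low-tree-width subcase --- into a single $N=f(t)$; each ingredient is monotone in its input, so such an $N$ exists. I expect the main obstacle to be the quantitative $2$-connected lemma of Case A, namely the clean alternative ``a long cycle through $N$ versus a $K_{2,t}$-subdivision based on $N$'': this carries the real content, and it also explains the shape of the conclusion, since the two unbounded-degree outcomes $W_t$ and $K_{3,t}$ come precisely from a high-degree vertex, while in the bounded-degree regime the graph is forced to deliver the third, cubic graph $V_t$. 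A secondary difficulty is controlling the structure of the tree-decomposition in the low-tree-width subcase so that a genuine ladder emerges.
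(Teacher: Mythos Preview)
This theorem is not proved in the paper at all: it is quoted from Oporowski, Oxley, and Thomas \cite{oxley} and used as a black box in the proof of Theorem~\ref{main}. Consequently there is no in-paper argument to compare your sketch against.

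For what it is worth, your outline is a reasonable modern attack on the Oporowski--Oxley--Thomas result, though it is not how they actually argue. Their proof is a direct structural induction within the class of $3$-connected graphs (via ear-type and contraction arguments specific to $3$-connectivity), without invoking the Excluded Grid Theorem or tree-decompositions. Your Case~A dichotomy is in the right spirit, and your observation that $V_t$ is cubic (so a $V_t$-minor upgrades to a $V_t$-subdivision) is correct and is exactly why $V_t$ is the natural bounded-degree outcome. The soft spots in your sketch are the ones you flag yourself: the precise ``long cycle through $N$ versus $K_{2,t}$-subdivision based on $N$'' lemma in Case~A needs a real proof, and in the low-tree-width subcase of Case~B, passing from a long path in the decomposition tree to an honest subdivision of $V_t$ inside $G$ (not merely a minor or a ``generalized ladder'') requires care, since bounded adhesion alone does not guarantee the two disjoint rails you need. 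None of this matters for the present paper, however, since only the statement of Theorem~\ref{oxley} is used.
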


\begin{figure}[h]\label{laddertypepicture}
	\begin{center}
			\includegraphics[width=0.5\textwidth]{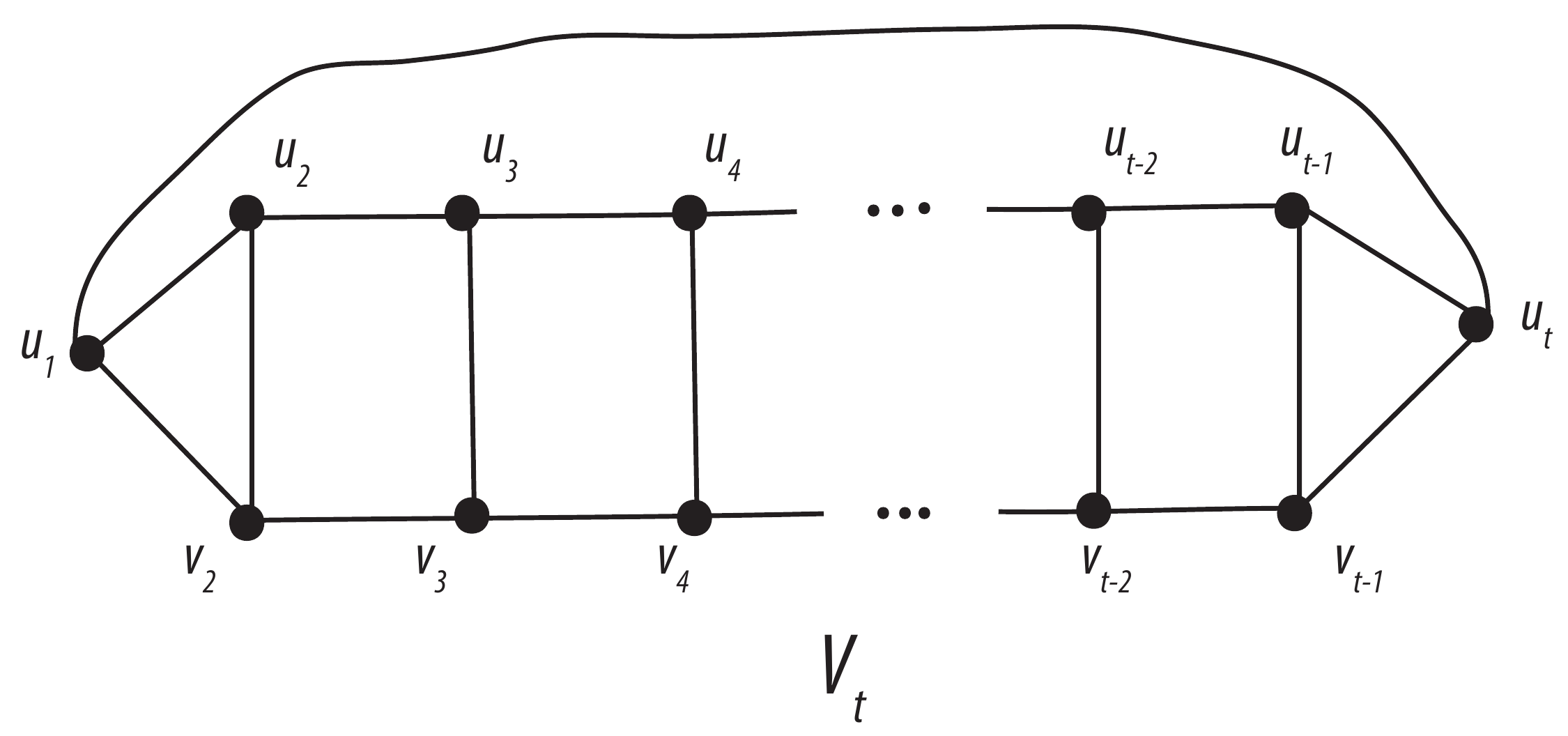}
	\end{center}
	\caption{The graph $V_t$ in Theorem~\ref{oxley}.}
\end{figure}

\begin{theorem}\label{main}
There exists a finite family $\mathcal{F}$ of $3$-connected nonplanar $4$-chromatic graphs such that if every graph $G$ in $\mathcal{F}$ satisfies $\pi(G,x)<(x)_{\downarrow 4}(x-1)^{|V(G)|-4}$ for all $x\in \mathbb{N}$ with $x\geq 4$, then Conjecture~\ref{tomesdongconj} holds to be true.
\end{theorem}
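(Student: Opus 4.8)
The plan is to combine the three lemmas with Oxley's structure theorem to cut the conjecture down to a finite list of graphs. By Lemma~\ref{3connreduction}, it suffices to verify the strict inequality $\pi(H,x)<(x)_{\downarrow 4}(x-1)^{|V(H)|-4}$ for every noncomplete $3$-connected $4$-chromatic graph $H$. So fix such an $H$ and let $n=|V(H)|$. I will argue that if $n$ is large enough, the inequality holds automatically, and hence only finitely many $H$ need to be checked by hand; moreover those finitely many exceptional $H$ can be taken nonplanar by Theorem~\ref{tomesplanar}, giving the family $\mathcal{F}$ claimed in the statement.

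The core step is: there is an explicit constant $N$ such that every $3$-connected $4$-chromatic noncomplete graph on at least $N$ vertices satisfies the strict bound. Apply Theorem~\ref{oxley} with $t=10$ to get $N=f(10)$: any $3$-connected graph with at least $N$ vertices contains a subgraph isomorphic to a subdivision of $W_{10}$, $V_{10}$, or $K_{3,10}$. I will handle the three cases and reduce each to one of Lemma~\ref{K33son} or Lemma~\ref{cactusson} via Proposition~\ref{subgraphpropn}. For the $K_{3,10}$ case this is essentially immediate: $H$ contains a subgraph $H'$ that is a subdivision of $K_{3,10}$, Lemma~\ref{K33son} bounds $\pi(H',x)$, and Proposition~\ref{subgraphpropn} propagates this to $\pi(H,x)\le \pi(H',x)(x-1)^{n-|V(H')|}<(x)_{\downarrow 4}(x-1)^{n-4}$ for every real $x\ge 3.95$, in particular for all integers $x\ge 4$. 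For the $W_{10}$ and $V_{10}$ cases the idea is that these contain a spanning cactus with many cycles — e.g. a wheel $W_t$ or the ladder-type graph $V_t$ has a spanning subgraph that is a cactus with at least $6$ independent cycles once $t\ge 10$ — so one first passes (within the subdivision) to a connected spanning cactus subgraph $C$ with $6$ cycles on $|V(H')|$ vertices, applies Lemma~\ref{cactusson} to $C$, and again propagates by Proposition~\ref{subgraphpropn} to get $\pi(H,x)<(x)_{\downarrow 4}(x-1)^{n-4}$ for all real $x\ge 3.998$, hence for all integers $x\ge 4$. (One must also check $\pi(C,x)$ dominates $\pi(H',x)$, which is exactly what Proposition~\ref{subgraphpropn} gives since $C$ is a connected spanning subgraph of $H'$.)

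It then remains to assemble $\mathcal{F}$. The set of noncomplete $3$-connected $4$-chromatic graphs on fewer than $N$ vertices is finite; discard from it every graph that is planar (for those the conjecture's inequality is already known to hold, strictly when the graph is noncomplete and not in $\mathcal{C}^*_4$, by Theorem~\ref{tomesplanar} — and noncomplete $3$-connected graphs are not in $\mathcal{C}^*_4(n)$ for $n>4$ since those have a cut vertex structure, while $K_4$ itself is complete and excluded), and let $\mathcal{F}$ be what remains, which is finite and consists of $3$-connected nonplanar $4$-chromatic graphs. If every $G\in\mathcal{F}$ satisfies $\pi(G,x)<(x)_{\downarrow 4}(x-1)^{|V(G)|-4}$ for all integers $x\ge 4$, then combining this with the large-$n$ case above and the planar case gives the hypothesis of Lemma~\ref{3connreduction}, which in turn yields Conjecture~\ref{tomesdongconj} for $k=4$.

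The main obstacle I expect is the $W_{10}$/$V_{10}$ step: one has to verify carefully that a subdivision of $W_{10}$ or of $V_{10}$ really does contain a connected spanning cactus with (at least) $6$ cycles, keeping track of how subdivisions interact with the cycle/block structure, so that Lemma~\ref{cactusson} can be applied with the right order parameter. The $K_{3,10}$ case and the final bookkeeping are comparatively routine; the choice $t=10$ in Oxley's theorem and the thresholds $3.95$, $3.998$ in the lemmas are precisely calibrated so that all three cases land below $(x)_{\downarrow 4}(x-1)^{n-4}$ for every integer $x\ge 4$.
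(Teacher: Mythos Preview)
Your overall strategy matches the paper's exactly---reduce to noncomplete $3$-connected $4$-chromatic graphs via Lemma~\ref{3connreduction}, invoke Theorem~\ref{tomesplanar} for the planar ones, and for large graphs use Theorem~\ref{oxley} together with Lemmas~\ref{K33son} and \ref{cactusson} through Proposition~\ref{subgraphpropn}. The assembly of $\mathcal{F}$ at the end is also the same.

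The gap is in your choice of parameter: taking $t=10$ in Theorem~\ref{oxley} does not work for the $W_t$ case. In the wheel $W_t$, every cycle other than the full rim passes through the hub $v_0$ and therefore uses exactly two spokes; since in a cactus each edge lies in at most one cycle, a cactus subgraph of $W_t$ can contain at most $\lfloor t/2\rfloor$ such cycles (and if it uses the full rim cycle it can contain no others). Hence a subdivision of $W_{10}$ contains a cactus subgraph with at most $5$ cycles, not $6$, and Lemma~\ref{cactusson} cannot be applied. The same count shows that $t=12$ is the correct threshold: then one gets six ``fan'' cycles $v_0v_{2i-1}v_{2i}$ (and their subdivided versions) meeting only at $v_0$, which is a cactus with $6$ cycles. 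The analysis of $V_t$ is similar: with $t=12$ one can take the two end triangles together with four alternating $4$-cycles along the ladder to obtain a connected cactus with $6$ cycles, but $t=10$ only yields $5$. The paper accordingly sets $t=12$ and $N=f(12)$; with that single correction your argument goes through. (Note also that the cactus need not be spanning in the subdivision---Proposition~\ref{subgraphpropn} absorbs any leftover vertices---so you can drop that requirement.)
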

\begin{proof}
Take $t=12$ in Theorem~\ref{oxley} and let $N=f(12)$. Let $\mathcal{F}$ be the family of all $3$-connected nonplanar $4$-chromatic graphs of order less than $N$. Assume that for every graph $G$ in $\mathcal{F}$, the inequality $\pi(G,x)<(x)_{\downarrow 4}(x-1)^{|V(G)|-4}$ holds for every integer $x\geq 4$. Now we shall show that Conjecture~\ref{tomesdongconj} holds to be true. Let $x\in \mathbb{N}$ with $x\geq 4$. By Lemma~\ref{3connreduction} it suffices to show that every noncomplete $3$-connected $4$-chromatic graph $H$ satisfies $\pi(H,x)<(x)_{\downarrow 4}(x-1)^{|V(H)|-4}$. Let $H$ be a $3$-connected $4$-chromatic graph. By Theorem~\ref{tomesplanar}, we may assume that $H$ is nonplanar. If $|V(H)|<N$ then the result holds by the assumption. So we may assume that $|V(H)|\geq N$. By Theorem~\ref{oxley}, $H$ contains a subgraph isomorphic to a subdivision of $W_{12}$, $V_{12}$ and $K_{3,12}$. If $H$ contains a subgraph isomorphic to a subdivision of $K_{3,12}$ then the result follows by Proposition~\ref{subgraphpropn} and Lemma~\ref{K33son}. If $H$ contains a subgraph isomorphic to a subdivision of  $W_{12}$ or $V_{12}$ then $H$ contains a subgraph isomorphic to cactus graph having $6$ cycles. Therefore the result follows from Proposition~\ref{subgraphpropn} and Lemma~\ref{cactusson}.
\end{proof}

\section{Proofs of lemmas used in the proof of the main result}

\subsection{Reduction to $3$-connected graphs}

Let $S$ be a set of vertices in a graph $G$. An $S$-\textit{lobe} of $G$ is an induced subgraph of $G$ whose vertex set consists of $S$ and the vertices of a component of $G-S$. A $k$-chromatic graph $G$ is called $k$-\textit{critical} if $\chi(H)<\chi(G)$ for every proper subgraph $H$ of $G$.

\begin{proposition}\cite[pg. 218]{westbook}\label{westlobe} Let $G$ be a $k$-critical graph with a cutset $S=\{x,y\}$. Then 
\begin{itemize}
\item[(i)] $xy\notin E(G)$, and 
\item[(ii)] $G$ has exactly two $S$-lobes and they can be named $G_1$, $G_2$ such that $G_1+xy$ is $k$-critical and $G_2/xy$ is $k$-critical.
\end{itemize}
\end{proposition}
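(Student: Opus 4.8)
The statement is vacuous when $k=2$, since a $2$-critical graph is $K_2$ and has no cutset; so assume $k\ge 3$. Then $G$ is $2$-connected, because a cut vertex would split $G$ into proper (hence $(k-1)$-colorable) subgraphs whose colorings can be recolored to agree at the single shared vertex and glued into a $(k-1)$-coloring of $G$. Thus $S=\{x,y\}$ separates $G$, the vertex set of $G-S$ splits into components $C_1,\dots,C_m$ with $m\ge 2$, and I write $G_i=G[S\cup C_i]$ for the $S$-lobes; each $G_i$ and each co-lobe $G-C_i$ is a \emph{proper} subgraph of $G$, hence $(k-1)$-colorable by criticality. The engine of the argument is the observation that, since $|S|=2$, the restriction to $S$ of a proper $(k-1)$-coloring carries only one bit of information — whether $f(x)=f(y)$ or $f(x)\ne f(y)$ — so any two $(k-1)$-colorings of subgraphs meeting exactly in $S$ that agree in this bit can be glued (after permuting colors) into a $(k-1)$-coloring of their union; and since $E(G)=\bigcup_i E(G_i)=E(G_i)\cup E(G-C_i)$ for every $i$, such a gluing would $(k-1)$-color $G$, which is impossible.

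For part (i): if $xy\in E(G)$, then in every $(k-1)$-coloring of every $G_i$ the vertices $x,y$ get distinct colors, so all these colorings agree in the bit above and glue to a $(k-1)$-coloring of $G$ — a contradiction. Hence $xy\notin E(G)$.

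For part (ii), I would classify each lobe $G_i$ and each co-lobe $G-C_i$ as type $\mathsf{E}$ (every $(k-1)$-coloring has $f(x)=f(y)$), type $\mathsf{D}$ (every one has $f(x)\ne f(y)$), or type $\mathsf{B}$ (both occur). Since $G$ has no $(k-1)$-coloring, the gluing observation applied to $G=G_i\cup(G-C_i)$ forces, for each $i$, that one of $G_i,\,G-C_i$ has type $\mathsf{D}$ and the other type $\mathsf{E}$; in particular no lobe has type $\mathsf{B}$. Moreover, types $\mathsf{D}$ and $\mathsf{E}$ are inherited by any larger $(k-1)$-colorable subgraph containing $S$. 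Now if $m\ge 3$, pick $C_1,C_2,C_3$: from $G_1\subseteq G-C_2$ the co-lobe $G-C_2$ has the type of $G_1$, so the lobe $G_2$ has the opposite type; but $G_1\subseteq G-C_3$ and $G_2\subseteq G-C_3$ then force $G-C_3$ to have both types, a contradiction. Hence $m=2$, and I may name the two lobes $G_1$ (type $\mathsf{E}$) and $G_2$ (type $\mathsf{D}$).

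It remains to check that $G_1+xy$ and $G_2/xy$ are $k$-critical. By the type assignments neither has a $(k-1)$-coloring, so each has chromatic number $\ge k$. For the reverse bound and for criticality I would show every proper subgraph is $(k-1)$-colorable: given an edge $e$ of $G_1$ (respectively an edge $e$ with both ends in $C_2$ — one exists because a type-$\mathsf{D}$ lobe cannot be the path on $\{x,y\}$ plus a single vertex, so $|C_2|\ge 2$), a $(k-1)$-coloring of $G-e$ restricts on the untouched side $G_2$ (resp. $G_1$) to a $(k-1)$-coloring whose bit is, by that side's type, $f(x)\ne f(y)$ (resp. $f(x)=f(y)$); restricting it to the other side then gives a $(k-1)$-coloring of $(G_1+xy)-e$ (resp. of $(G_2/xy)-\bar e$, with $\bar e$ the image of $e$). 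Vertex-deleted subgraphs are handled the same way, the identified vertex being the only extra case (its deletion leaves $G[C_2]$). Taking $e$ arbitrary also yields chromatic number exactly $k$, so both graphs are $k$-critical. The only technical nuisance is that $G_2/xy$ may acquire parallel edges when a vertex of $C_2$ sees both $x$ and $y$, which are irrelevant for colorings; and the main obstacle is the ``exactly two lobes'' step — organizing the $\mathsf{D}/\mathsf{E}$ bookkeeping so that a third lobe is impossible.
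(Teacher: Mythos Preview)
The paper does not prove this proposition; it is quoted from West's textbook and used as a black box in the proof of Lemma~\ref{3connreduction}. There is therefore no in-paper argument to compare against.

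Your proof plan is correct and is the standard textbook argument: classify each $(k-1)$-colorable piece containing $S$ by whether its colorings force $f(x)=f(y)$, force $f(x)\ne f(y)$, or allow both, and exploit that colorings agreeing in this single bit glue along $S$. Two minor remarks. First, for the criticality of $G_2/xy$ you restrict attention to edges with both ends in $C_2$, but you must also treat edges $wv$ incident to the contracted vertex $w$; the same ``delete in $G$, restrict through $G_1$'' trick handles these, and in fact your type bookkeeping already shows that no $v\in C_2$ is adjacent to both $x$ and $y$ (otherwise a $(k-1)$-coloring of $G-xv$, which has $f(x)=f(y)$ by the type $\mathsf{E}$ of $G_1$ and $f(v)\ne f(y)$ from the surviving edge $yv$, would restrict to a $(k-1)$-coloring of $G_2$ with $f(x)=f(y)$, contradicting type $\mathsf{D}$), so the parallel-edge nuisance you flag never actually occurs. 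Second, edge-criticality only requires checking edge deletions; the vertex-deletion paragraph is unnecessary.
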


\noindent \textbf{Proof of Lemma~\ref{3connreduction}.} We proceed by induction on the number of edges. If $G\in \mathcal{C}_4^*(|V(G)|)$, then the equality $\pi(G,x)= (x)_{\downarrow 4}(x-1)^{|V(G)|-4}$ holds and the result is clear. The minimum number of edges of a connected $4$-chromatic graph $G$ which does not belong to $\mathcal{C}_4^*(|V(G)|)$ is $8$ and the extremal graph is the union of a $K_4$ and $K_3$ which intersect in an edge. So $\pi(G,x)=\frac{(x)_{\downarrow 4}(x)_{\downarrow 3}}{(x)_{\downarrow 2}}=(x)_{\downarrow 4}(x-2)$ and the strict inequality $(x)_{\downarrow 4}(x-2)<(x)_{\downarrow 4}(x-1)$ holds. Now suppose that $G$ is a connected $4$-chromatic graph with $|E(G)|>8$ and $G\notin \mathcal{C}_4^*(|V(G)|)$.

If $G$ is not $2$-connected, then $G$ has a block $B$ such that $|E(B)|<|E(G)|$ and $\chi(B)=4$ as $\chi(G)=\operatorname{max}\{\chi(B):\, B\text{ is a block of} \ G\}$. If $B\cong K_4$ then the result follows by Proposition~\ref{general_k_clique_lemma}. Suppose $B\ncong K_4$, then $B\notin \mathcal{C}_4^*(|V(B)|)$ as $B$ is $2$-connected and the only $2$-connected graph in $\mathcal{C}_4^*(|V(B)|)$ is the complete graph. By the induction hypothesis we have $\pi(B,x)<(x)_{\downarrow 4}(x-1)^{|V(B)|-4}$. By Proposition~\ref{subgraphpropn}, we have $\pi(G,x)\leq \pi(B,x)(x-1)^{|V(G)|-|V(B)|}$. Hence we get
$\pi(G,x)<(x)_{\downarrow 4}(x-1)^{|V(G)|-4}.$

Now we may assume that $G$ is $2$-connected. If $G$ is not $4$-critical then there is an edge $e\in E(G)$ such that $\chi(G-e)=4$. Also $G-e$ is connected as $G$ is $2$-connected. If $G-e$ is not $2$-connected then we can repeat the same argument as in the previous case to show that $\pi(G-e,x)\leq (x)_{\downarrow 4}(x-1)^{|V(G-e)|-4}$ with equality if and only if $G-e\in \mathcal{C}_4^*(|V(G-e)|)$. Note that $V(G)=V(G-e)$. If $G-e\in \mathcal{C}_4^*(|V(G)|)$ then $\chi(G/e)\geq 4$ and hence $\pi(G/e,x)>0$. If $G-e\notin \mathcal{C}_4^*(|V(G)|)$ then $\pi(G-e,x)<(x)_{\downarrow 4}(x-1)^{|V(G)|-4}$ by the induction hypothesis. In each case we get
$$\pi(G,x)=\pi(G-e,x)-\pi(G/e,x)<(x)_{\downarrow 4}(x-1)^{|V(G)|-4}.$$

For the rest of the proof we may assume that $G$ is a $4$-critical graph and $G$ is not $3$-connected. Let $S=\{u,v\}$ be a cutset of $G$. By Proposition~\ref{westlobe}, $uv\notin E(G)$ and $G$ has exactly two $S$-lobes and they can be named as $G_1$, $G_2$ such that $G_1+uv$ is $4$-critical and $G_2/uv$ is $4$-critical. So by the induction hypothesis, we have 
$$\pi(G_1+uv,x)\leq (x)_{\downarrow 4}\,(x-1)^{|V(G_1+uv)|-4}$$ and
$$\pi(G_2/uv,x)\leq (x)_{\downarrow 4}\,(x-1)^{|V(G_2/uv)|-4}.$$ By the observation in \eqref{contaddremark}, the inequalities $3\leq \chi(G_2+uv)\leq 5$ and $3\leq \chi(G_1/uv)\leq 5$ hold. If $\chi(G_2+uv)=3$ then by Theorem~\ref{3chromtomes}, 
$$\pi(G_2+uv,x)\leq (x-1)^{|V(G_2+uv)|}-(x-1).$$
If $\chi(G_2+uv)\geq 4$ then let $G'$ be a $4$-chromatic connected  spanning subgraph of $G_2+uv$. By the induction hypothesis, 
$$\pi(G',x)\leq (x)_{\downarrow 4}\,(x-1)^{|V(G')|-4}=(x)_{\downarrow 4}\,(x-1)^{|V(G_2+uv)|-4}.$$

Since $\pi(G_2+uv,x)\leq \pi(G',x)$, we get $\pi(G_2+uv,x)\leq (x)_{\downarrow 4}\,(x-1)^{|V(G_2+uv)|-4}.$ Now it is easy to check that 
$$(x)_{\downarrow 4}\,(x-1)^{|V(G_2+uv)|-4}\leq (x-1)^{|V(G_2+uv)|}-(x-1).$$
Hence, in each case we have 
$$\pi(G_2+uv,x)\leq (x-1)^{|V(G_2+uv)|}-(x-1).$$
 Similarly, we also have 
$$\pi(G_1/uv,x)\leq (x-1)^{|V(G_1/uv)|}-(x-1).$$
By the Complete Cutset Theorem,
\begin{eqnarray*}
\pi(G+uv,x) &=& \frac{\pi(G_1+uv,x)\,\pi(G_2+uv,x)}{x(x-1)}\\
&\leq & \frac{(x)_{\downarrow 4}\,(x-1)^{|V(G_1+uv)|-4}\,\left((x-1)^{|V(G_2+uv)|}-(x-1)\right)}{x(x-1)}\\
&=& \frac{(x)_{\downarrow 4}\left((x-1)^{|V(G)|-3}-(x-1)^{|V(G_1)|-4}\right)}{x}
\end{eqnarray*}
where the last equality follows since $|V(G_1+uv)|=|V(G_1)|$, $|V(G_2+uv)|=|V(G_2)|$ and $|V(G)|=|V(G_1)|+|V(G_2)|-2$.
Similarly,
\begin{eqnarray*}
\pi(G/uv,x) &=& \frac{\pi(G_1/uv,x)\,\pi(G_2/uv,x)}{x}\\
&\leq & \frac{\left((x-1)^{|V(G_1/uv)|}-(x-1)\right)\,(x)_{\downarrow 4}\,(x-1)^{|V(G_2/uv)|-4}}{x}\\
&=& \frac{(x)_{\downarrow 4}\left((x-1)^{|V(G)|-4}-(x-1)^{|V(G_2)|-4}\right)}{x}
\end{eqnarray*}
as $|V(G_1/uv)|=|V(G_1)|-1$, $|V(G_2/uv)|=|V(G_2)|-1$. Now, let $|V(G)|=n$, $|V(G_1)|=n_1$ and $|V(G_2)|=n_2$. Then,
\begin{eqnarray*}
\pi(G,x)&=&\pi(G+uv,x)+\pi(G/uv,x)\\
&\leq & \frac{(x)_{\downarrow 4}\left((x-1)^{n-3}-(x-1)^{n_1-4}\right)}{x} \,+\, \frac{(x)_{\downarrow 4}\left((x-1)^{n-4}-(x-1)^{n_2-4}\right)}{x}\\
&=&\frac{(x)_{\downarrow 4}\left((x-1)^{n-3}+(x-1)^{n-4}-(x-1)^{n_1-4}-(x-1)^{n_2-4}\right)}{x}\\
&=&\frac{(x)_{\downarrow 4}\left(x\,(x-1)^{n-4}-(x-1)^{n_1-4}-(x-1)^{n_2-4}\right)}{x}\\
&=&(x)_{\downarrow 4}\left((x-1)^{n-4}-\frac{(x-1)^{n_1-4}}{x}-\frac{(x-1)^{n_2-4}}{x}\right)\\
&<&(x)_{\downarrow 4}\, (x-1)^{n-4}.
\end{eqnarray*}
Thus the result follows.
\qed

\subsection{Proof of Lemma~\ref{K33son}}

The chromatic polynomial of a subdivision of $K_{3,t}$ can be calculated using the chromatic polynomials of theta graphs and a  certain subdivision of $K_4$. So, in order to prove Lemma~\ref{K33son}, we shall first analyze theta graphs and a subdivision of $K_4$.

\subsubsection{Theta graphs}

A \textit{theta} graph $\theta_{s_1,s_2,s_3}$ is formed by taking a pair of vertices $u$, $v$ and joining them by three internally disjoint paths of sizes $s_1,s_2,s_3$ (see Figure~\ref{thetapicture}). By the Addition-Contraction Formula, it is easy to see that 
\begin{equation}\label{thetageneralformula}
\pi(\theta_{s_1,s_2,s_3},x)=\frac{\prod\limits_{i=1}^3\left((x-1)^{s_i+1}+(-1)^{s_i+1}(x-1)\right)}{\left(x(x-1)\right)^2}+\frac{\prod\limits_{i=1}^3\left((x-1)^{s_i}+(-1)^{s_i}(x-1)\right)}{x^2}
\end{equation} 
(see, for example, \cite{brownsokal} for details).

\begin{figure}[h]
	\begin{center}
			\includegraphics[width=0.2\textwidth]{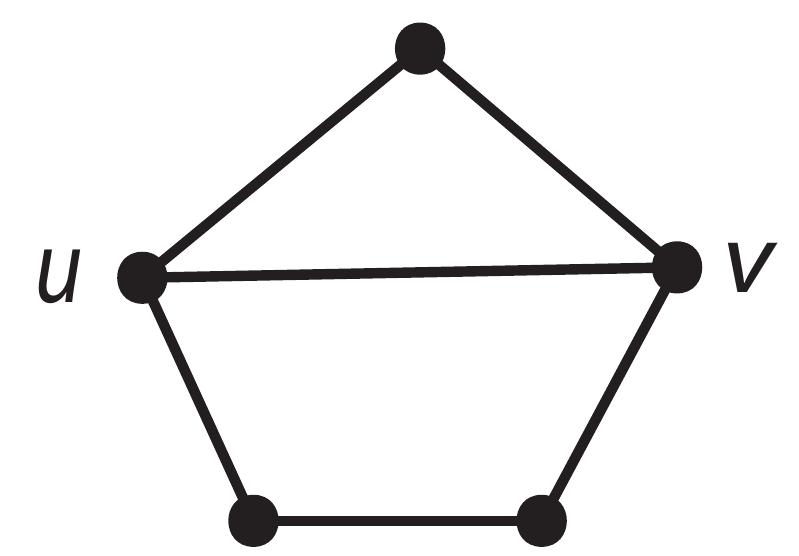}
	\end{center}
	\caption{The graph $\theta_{2,1,3}$.}
	\label{thetapicture}
\end{figure}

\begin{lemma}\label{thetashift}
$\pi(\theta_{s_1,s_2,s_3},x+1)$ is equal to $$\frac{x}{x+1}\left(x^{\left(\sum\limits_{i=1}^3s_i\right)-1}+(-1)^{s_1+s_2}x^{s_3}+(-1)^{s_1+s_3}x^{s_2}+(-1)^{s_2+s_3}x^{s_1}+(-1)^{\sum\limits_{i=1}^3s_i}(x-1)\right).$$
\end{lemma}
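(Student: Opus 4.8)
The statement is really just the closed form \eqref{thetageneralformula} rewritten after the substitution $x\mapsto x+1$, so the plan is a direct algebraic manipulation. Write $\epsilon_i=(-1)^{s_i}$ and $S=s_1+s_2+s_3$. First I would replace $x$ by $x+1$ throughout \eqref{thetageneralformula}: each numerator factor $(x-1)^{s_i+1}+(-1)^{s_i+1}(x-1)$ becomes $x^{s_i+1}-\epsilon_i x=x\bigl(x^{s_i}-\epsilon_i\bigr)$, each factor $(x-1)^{s_i}+(-1)^{s_i}(x-1)$ becomes $x^{s_i}+\epsilon_i x$, and the denominators $\bigl(x(x-1)\bigr)^2$ and $x^2$ become $\bigl(x(x+1)\bigr)^2$ and $(x+1)^2$. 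Pulling the three factors of $x$ out of the first product and cancelling against $\bigl(x(x+1)\bigr)^2$, one gets
$$\pi(\theta_{s_1,s_2,s_3},x+1)=\frac{x\,P+Q}{(x+1)^2},\qquad P=\prod_{i=1}^3\bigl(x^{s_i}-\epsilon_i\bigr),\quad Q=\prod_{i=1}^3\bigl(x^{s_i}+\epsilon_i x\bigr).$$

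Next I would expand both triple products into their eight monomials and combine. In $xP$ the three ``single-index'' terms are $-\epsilon_i x^{\,S-s_i+1}$, and in $Q$ they are $+\epsilon_i x^{\,S-s_i+1}$, so these cancel in $xP+Q$. The leading terms combine to $x^{S+1}+x^S=x^S(x+1)$; the ``double-index'' terms combine to $(x+1)\bigl(\epsilon_2\epsilon_3 x^{s_1+1}+\epsilon_1\epsilon_3 x^{s_2+1}+\epsilon_1\epsilon_2 x^{s_3+1}\bigr)$; and the term $-\epsilon_1\epsilon_2\epsilon_3 x$ from $xP$ together with $\epsilon_1\epsilon_2\epsilon_3 x^3$ from $Q$ give $\epsilon_1\epsilon_2\epsilon_3\,x(x-1)(x+1)$. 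Hence every surviving term carries a factor $(x+1)$, and in fact
$$xP+Q=(x+1)\,x\Bigl(x^{S-1}+\epsilon_2\epsilon_3 x^{s_1}+\epsilon_1\epsilon_3 x^{s_2}+\epsilon_1\epsilon_2 x^{s_3}+\epsilon_1\epsilon_2\epsilon_3(x-1)\Bigr).$$
Dividing by $(x+1)^2$ and recalling $\epsilon_i\epsilon_j=(-1)^{s_i+s_j}$ and $\epsilon_1\epsilon_2\epsilon_3=(-1)^{S}$ yields exactly the claimed expression.

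There is no genuine obstacle here; the only care needed is the bookkeeping in the two eight-term expansions, together with the two observations that make it collapse: the three degree-$(S-s_i+1)$ terms cancel, and each remaining term is divisible by $x+1$. One small point I would state explicitly is that this chain of equalities is valid as an identity of polynomials in $x$ (equivalently, for all but finitely many real $x$), which is automatic because \eqref{thetageneralformula} is itself a polynomial identity; in particular the division by $(x+1)^2$ is harmless.
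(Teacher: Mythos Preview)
Your proposal is correct and follows essentially the same route as the paper: substitute $x\mapsto x+1$ in \eqref{thetageneralformula}, expand the two triple products, and observe that every surviving term carries a factor of $x+1$. Your bookkeeping with $\epsilon_i=(-1)^{s_i}$ and the explicit identification of which terms cancel (the $-\epsilon_i x^{S-s_i+1}$ against the $+\epsilon_i x^{S-s_i+1}$) is a slight streamlining of the paper's presentation, but the argument is the same direct computation.
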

\begin{proof}
Using the formula given in \eqref{thetageneralformula},
\begin{eqnarray*}
\pi(\theta_{s_1,s_2,s_3},x+1) &=& \frac{\prod\limits_{i=1}^3 \left(x^{s_i+1}+(-1)^{s_i+1}x\right)}{x^2(x+1)^2}
+\frac{\prod\limits_{i=1}^3 \left(x^{s_i}+(-1)^{s_i}x\right)}{(x+1)^2}\\
& = & \frac{x}{(x+1)^2} \left(\prod\limits_{i=1}^3 \left(x^{s_i}+(-1)^{s_i+1}\right)
\, +\, x^2 \, \prod\limits_{i=1}^3 \left(x^{s_i-1}+(-1)^{s_i}\right) \right).\\
\end{eqnarray*}

Calculations show that the latter is equal to \\

$\frac{x}{(x+1)^2}(x^{s_1+s_2+s_3}+x^{s_1+s_2+s_3-1}+(-1)^{s_2+s_3}x^{s_1+1}+(-1)^{s_2+s_3}x^{s_1}+(-1)^{s_1+s_3}x^{s_2+1}+(-1)^{s_1+s_3}x^{s_2}$ 

$+(-1)^{s_1+s_2}x^{s_3+1}+(-1)^{s_1+s_2}x^{s_3}+(-1)^{s_1+s_2+s_3}x^2-(-1)^{s_1+s_2+s_3}).$\\

Now we rewrite the latter as \\

$\frac{x}{(x+1)^2}(x^{s_1+s_2+s_3-1}(x+1)+(-1)^{s_2+s_3}x^{s_1}(x+1)+(-1)^{s_1+s_3}x^{s_2}(x+1)$ 

$+(-1)^{s_1+s_2}x^{s_3}(x+1)+(-1)^{s_1+s_2+s_3}(x^2-1))$\\

which simplifies to\\

$\frac{x}{x+1}\left(x^{s_1+s_2+s_3-1}+(-1)^{s_1+s_2}x^{s_3}+(-1)^{s_1+s_3}x^{s_2}+(-1)^{s_2+s_3}x^{s_1}+(-1)^{s_1+s_2+s_3}(x-1)\right).$
\end{proof}

\begin{definition}
Given $a,b,c\in \mathbb{Z^{+}}$, we define a function $G_{a,b,c}$  by
\[
G_{a,b,c}(x) = 
\begin{cases}
1+\frac{3}{x^3}+\frac{1}{x^4} & \text{if}\ \text{none of } a,b,c \ \text{is equal to} \ 1 \\
1+\frac{2}{x^3}+\frac{1}{x^6} & \text{if}\ \text{exactly one of } a,b,c \ \text{is equal to} \ 1 \\
1+\frac{1}{x}+\frac{1}{x^3}+\frac{1}{x^4} & \text{if}\ \text{exactly two of } a,b,c \ \text{are equal to} \ 1 \\
1+\frac{2}{x}+\frac{1}{x^2} & \text{if}\ \text{all of } a,b,c \ \text{are equal to} \ 1 \\
\end{cases}
\]
\end{definition}
\begin{lemma}\label{thetabound}
	Let $a,b,c \in \mathbb{Z}^{+}$. Then for every real number $x\geq 1$,
	$$\pi(\theta_{a,b,c},x+1)\leq \frac{x^{a+b+c}}{x+1}\, G_{a,b,c}(x)$$
\end{lemma}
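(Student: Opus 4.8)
The plan is to use the closed form for $\pi(\theta_{a,b,c},x+1)$ established in Lemma~\ref{thetashift}, and then bound the error terms crudely, splitting into the four cases of the definition of $G_{a,b,c}$ according to how many of $a,b,c$ equal $1$. Writing $s = a+b+c$, Lemma~\ref{thetashift} gives
$$\pi(\theta_{a,b,c},x+1) = \frac{x}{x+1}\left(x^{s-1}+(-1)^{a+b}x^{c}+(-1)^{a+c}x^{b}+(-1)^{b+c}x^{a}+(-1)^{s}(x-1)\right),$$
so it suffices to show that
$$x^{s-1}+(-1)^{a+b}x^{c}+(-1)^{a+c}x^{b}+(-1)^{b+c}x^{a}+(-1)^{s}(x-1) \;\leq\; x^{s-1}\,G_{a,b,c}(x)$$
for $x\geq 1$. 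Dividing through by $x^{s-1}$, the claim becomes that the sum of the four ``tail'' terms $(-1)^{a+b}x^{c-s+1}+(-1)^{a+c}x^{b-s+1}+(-1)^{b+c}x^{a-s+1}+(-1)^{s}(x-1)x^{1-s}$ is at most $G_{a,b,c}(x)-1$. Since we only need an upper bound, I would discard any term with a negative sign and bound each surviving positive term by replacing its exponent with the largest value it can attain under the constraints of the relevant case.

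I would now carry out the case analysis. If none of $a,b,c$ equals $1$, then $a,b,c\geq 2$, so $c-s+1 = c-a-b-c+1 = 1-a-b \leq -3$, and similarly for the $b$ and $a$ terms; also $(x-1)x^{1-s}$ with $s\geq 6$ contributes at most $x\cdot x^{-5} = x^{-4}$ (using $x-1<x$ and $1-s\leq -5$). Summing the three exponent-$(-3)$-or-smaller terms and the $x^{-4}$ term and using $x\geq 1$ gives a bound of $3x^{-3}+x^{-4}$, matching the first line. If exactly one of them, say $a=1$ while $b,c\geq 2$: the term with exponent $a-s+1 = 2-b-c\leq -2$, hmm — but we want $x^{-3}$. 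Here one must be a little more careful: with $a=1$, $s=1+b+c\geq 5$, so $(x-1)x^{1-s}\leq x^{-(s-2)}\leq x^{-3}$, and the $b$- and $c$-terms have exponents $b-s+1 = -c\leq -2$ and $c-s+1=-b\leq -2$; but the sign on the $a$-term is $(-1)^{b+c}$ and on the constant is $(-1)^{s}=(-1)^{1+b+c}=-(-1)^{b+c}$, so those two always have opposite signs and exactly one survives — this is the mechanism that produces $\tfrac{2}{x^3}+\tfrac{1}{x^6}$ rather than more terms, and one should check $s\geq 6$ actually holds when $b,c\geq 2$ and track the $x^{-6}$ coming from $(-1)^s(x-1)x^{1-s}$ when it is the surviving term. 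The remaining two cases (exactly two equal to $1$, all three equal to $1$) are analogous but shorter; when $a=b=c=1$ the graph is just $\theta_{1,1,1}$ (a theta with three single edges), $s=3$, and the bound should come out tight or nearly tight, so that case wants the formula checked exactly.

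The main obstacle is the bookkeeping of signs: the whole point of the four different functions $G_{a,b,c}$ is that the parities of $a+b$, $a+c$, $b+c$, and $a+b+c$ are not independent, so in each case certain pairs of the five terms are forced to cancel or are forced to have a sign that lets us drop them, and one must verify in each case exactly which terms can be positive simultaneously and what their worst-case exponents are. I expect no single computation to be hard, but getting the case boundaries right — especially confirming the minimal value of $s$ in each case (to justify the $x^{-4}$, $x^{-6}$, $x^{-4}$, $x^{-2}$ tails) and confirming that the sign constraints really do kill the extra terms — is where care is needed. Once the parities are sorted, each inequality reduces to ``a sum of a few powers $x^{-j}$ with $j$ at least as large as claimed, for $x\geq 1$,'' which is immediate.
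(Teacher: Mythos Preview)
Your plan is essentially the paper's: start from the closed form of Lemma~\ref{thetashift}, reduce to showing the tail is at most $x^{s-1}(G_{a,b,c}(x)-1)$, and split into four cases by how many of $a,b,c$ equal $1$. Cases~1 and~4 go exactly as you say.

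The gap is in Case~2 (exactly one equal to $1$, say $a=1$). First, the claim ``$s\geq 6$ when $b,c\geq 2$'' is simply false: $b=c=2$ gives $s=5$. Second, the parity relation you single out (the $a$-term and the constant term have opposite signs) is correct but is not the mechanism that yields $\tfrac{2}{x^3}+\tfrac{1}{x^6}$. After your cancellation the $b$- and $c$-terms still have exponents $-c$ and $-b$, which can be as large as $-2$; the crude bound then gives $2x^{-2}$, which exceeds $2x^{-3}$ for $x>1$, so the argument as written does not close. The paper handles this by further sub-casing within Case~2: if $b,c\geq 3$ then $x^{c},x^{b}\leq x^{b+c-3}$ directly; if one of $b,c$ equals $2$, the parity $(-1)^{1+2}=-1$ forces the corresponding cross-term negative, and a short computation shows the whole tail is in fact nonpositive, so the bound is trivial. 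So your architecture is right, and you even flagged sign bookkeeping as the main obstacle, but the particular sign argument you wrote down is insufficient; the fix is the explicit sub-casing (on whether $b$ or $c$ equals $2$) that the paper carries out.
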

\begin{proof}
	By Lemma~\ref{thetashift}, $\pi(\theta_{a,b,c},x+1)$ is equal to  $$\frac{x}{x+1}\left(x^{a+b+c-1}+(-1)^{a+b}x^c+(-1)^{a+c}x^b+(-1)^{b+c}x^a+(-1)^{a+b+c}(x-1)\right).$$
	So, it suffices to show that 
	\begin{equation}\label{thetaboundproofineq}
x^{a+b+c-1}+(-1)^{a+b}x^c+(-1)^{a+c}x^b+(-1)^{b+c}x^a+(-1)^{a+b+c}(x-1) \, \leq \, G_{a,b,c}(x)\,x^{a+b+c-1}.
	\end{equation}

	To prove the inequality in \eqref{thetaboundproofineq}, we consider several cases.\\
	
	\noindent \underline{Case $1$}: $a,b,c \geq 2$. 
	
	By the definition of $G_{a,b,c}$, 
	$$G_{a,b,c}(x) \, x^{a+b+c-1}=x^{a+b+c-1}+3x^{a+b+c-4}+x^{a+b+c-5}.$$
	Each of $(-1)^{a+b}x^c$, $(-1)^{a+c}x^b$ and  $(-1)^{b+c}x^a$ is at most $x^{a+b+c-4}$. So, $$(-1)^{a+b}x^c+(-1)^{a+c}x^b+(-1)^{b+c}x^a\leq 3x^{a+b+c-4}.$$
	Also, it is clear that $(-1)^{a+b+c}(x-1)\leq x^{a+b+c-5}$. Now the inequality in \eqref{thetaboundproofineq} follows.\\

		\noindent \underline{Case $2$}: exactly one of $a,b$ and $c$ is equal to $1$. 
		
		Without loss, we may assume that $a=1$ and $b,c\geq 2$. By the definition of $G_{a,b,c}$, 
		$$G_{a,b,c}(x) \, x^{a+b+c-1}=x^{b+c}+2x^{b+c-3}+x^{b+c-6}.$$
		Also, the left side of \eqref{thetaboundproofineq} is equal to $$x^{b+c}+(-1)^{1+b}x^c+(-1)^{1+c}x^b+(-1)^{b+c}.$$
		
		If $b=c=2$ then	$G_{a,b,c}(x)x^{a+b+c-1}$ is equal to  $x^4+2x+x^{-2}$ and the left side of \eqref{thetaboundproofineq} is equal to  $x^{4}-2x^{2}+1$. And it is clear that $x^{4}-2x^{2}+1\leq x^4+2x+x^{-2}$.
		
If exactly one of $b$ and $c$ is equal to $2$, say, $b=2$ and $c\geq 3$, then	$G_{a,b,c}(x)x^{a+b+c-1}$ is equal to $x^{c+2}+2x^{c-1}+x^{c-4}$  and the left side of \eqref{thetaboundproofineq} is equal to $x^{c+2}-x^c+(-1)^{c+1}x^2+(-1)^c$. Now it is easy to see that $x^{c+2}-x^c+(-1)^{c+1}x^2+(-1)^c\leq x^{c+2}+2x^{c-1}+x^{c-4}$ since $c\geq 3$.

If $b,c\geq 3$ then each of $(-1)^{1+b}x^c$ and  $(-1)^{1+c}x^b$ is at most $x^{b+c-3}$. So, $(-1)^{1+b}x^c+(-1)^{1+c}x^b \leq 2x^{b+c-3}$. Also, $(-1)^{b+c}\leq x^{b+c-6}$. Therefore,
$$x^{b+c}+(-1)^{1+b}x^c+(-1)^{1+c}x^b+(-1)^{b+c}\leq x^{b+c}+2x^{b+c-3}+x^{b+c-6}.$$

\noindent \underline{Case $3$}: exactly two of $a,b$ and $c$ is equal to $1$. 
		
		Without loss, we may assume that $a=b=1$ and $c\geq 2$. By the definition of $G_{a,b,c}$, 
		$$G_{a,b,c}(x) \, x^{a+b+c-1}=x^{1+c}+x^{c}+x^{c-2}+x^{c-3}.$$
		Also, the left side of \eqref{thetaboundproofineq} is equal to $$x^{1+c}+x^c+(-1)^{1+c}(x+1).$$
		It is easy to see that $(-1)^{1+c}(x+1)\leq x^{c-2}+x^{c-3}$ since $c\geq 2$. So,
		$$x^{1+c}+x^c+(-1)^{1+c}(x+1)\leq x^{1+c}+x^{c}+x^{c-2}+x^{c-3}.$$
		
		\noindent \underline{Case $4$}: $a=b=c=1$.
		
	 By the definition of $G_{a,b,c}$, 
		$$G_{a,b,c}(x) \, x^{a+b+c-1}=x^2+2x+1.$$
		The left side of \eqref{thetaboundproofineq} is also  equal to $x^2+2x+1$. Therefore the result follows.
	\end{proof}
	
	\begin{lemma}\label{thetalastbound}
	Let $a,b,c\in \mathbb{Z^+}$ be such that at least one of $a$, $b$, $c$ is at least $2$. Then,
	$$\pi(\theta_{a,b,c},x+1)\leq \frac{x^{a+b+c}}{x+1}\left(1+\frac{1}{x}+\frac{1}{x^3}+\frac{1}{x^4}\right)$$
	for every real number $x\geq \sqrt{2}$.
	\end{lemma}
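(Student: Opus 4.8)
The plan is to reduce this immediately to the bound already established in Lemma~\ref{thetabound}. That lemma gives, for every real $x\geq 1$,
\[
\pi(\theta_{a,b,c},x+1)\leq \frac{x^{a+b+c}}{x+1}\,G_{a,b,c}(x),
\]
so it suffices to prove that $G_{a,b,c}(x)\leq 1+\frac{1}{x}+\frac{1}{x^3}+\frac{1}{x^4}$ for every real $x\geq\sqrt{2}$, under the hypothesis that at least one of $a,b,c$ is at least $2$.

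The first observation is that the hypothesis ``at least one of $a,b,c$ is $\geq 2$'' excludes the fourth case in the definition of $G_{a,b,c}$ (the case $a=b=c=1$), so only the first three cases can occur, and I would handle them one at a time. In the case where none of $a,b,c$ equals $1$, we have $G_{a,b,c}(x)=1+\frac{3}{x^3}+\frac{1}{x^4}$, and the desired inequality is equivalent to $\frac{2}{x^3}\leq\frac{1}{x}$, i.e.\ to $x^2\geq 2$; this is exactly where the threshold $x\geq\sqrt{2}$ comes from. In the case where exactly one of $a,b,c$ equals $1$, we have $G_{a,b,c}(x)=1+\frac{2}{x^3}+\frac{1}{x^6}$, and the inequality reduces to $\frac{1}{x^3}+\frac{1}{x^6}\leq\frac{1}{x}+\frac{1}{x^4}$, which holds for all $x\geq 1$ term by term since $\frac{1}{x^3}\leq\frac{1}{x}$ and $\frac{1}{x^6}\leq\frac{1}{x^4}$. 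In the case where exactly two of $a,b,c$ equal $1$, we have $G_{a,b,c}(x)=1+\frac{1}{x}+\frac{1}{x^3}+\frac{1}{x^4}$, so the inequality holds with equality.

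There is essentially no obstacle here: the content is entirely contained in Lemma~\ref{thetabound}, and the remaining work is the elementary case check above. The only point worth flagging is that the constant $\sqrt{2}$ is tight and is forced by the first case, so one should present that case as the ``binding'' one and note that the other two cases are slack (valid already for $x\geq 1$).
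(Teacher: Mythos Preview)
Your proof is correct and is essentially identical to the paper's own argument: the paper also reduces to Lemma~\ref{thetabound} and then checks the two nontrivial inequalities $\frac{3}{x^3}+\frac{1}{x^4}\leq \frac{1}{x}+\frac{1}{x^3}+\frac{1}{x^4}$ and $\frac{2}{x^3}+\frac{1}{x^6}\leq \frac{1}{x}+\frac{1}{x^3}+\frac{1}{x^4}$ for $x\geq\sqrt{2}$. Your write-up is slightly more detailed (you make the third, equality case explicit and identify which case forces the threshold $\sqrt{2}$), but the content is the same.
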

	
	\begin{proof} It is straightforward to check that 
	$$\frac{3}{x^3}+\frac{1}{x^4}\leq \frac{1}{x}+\frac{1}{x^3}+\frac{1}{x^4}$$
	and 
	$$\frac{2}{x^3}+\frac{1}{x^6}\leq \frac{1}{x}+\frac{1}{x^3}+\frac{1}{x^4}$$
	for all real $x\geq \sqrt{2}$. Thus the result follows by Lemma~\ref{thetabound}.
	\end{proof}

\subsubsection{A subdivision of $K_4$ }

Let $SK_4^{s_1,s_2,s_3}$ denote a subdivision of $K_4$ such that three edges of $K_4$ are replaced with paths of sizes $s_1$, $s_2$ and $s_3$, and all the other edges of $K_4$ are left undivided (see Figure~\ref{SK4picture}). If  $uv$ is an undivided edge of $K_4$, then $$SK_4^{s_1,s_2,s_3}-uv\cong \theta_{s_1+1,s_2,s_3+1}$$ and $$SK_4^{s_1,s_2,s_3}/uv \cong \theta_{s_1,s_2+1,s_3}.$$ Therefore,
\begin{equation}\label{SK4formula}
\pi (SK_4^{s_1,s_2,s_3},\, x)= \pi(\theta_{s_1+1,s_2,s_3+1}, \, x)-\pi(\theta_{s_1,s_2+1,s_3}, \, x)
\end{equation}\\

\begin{figure}[h]
	\begin{center}
			\includegraphics[width=0.3\textwidth]{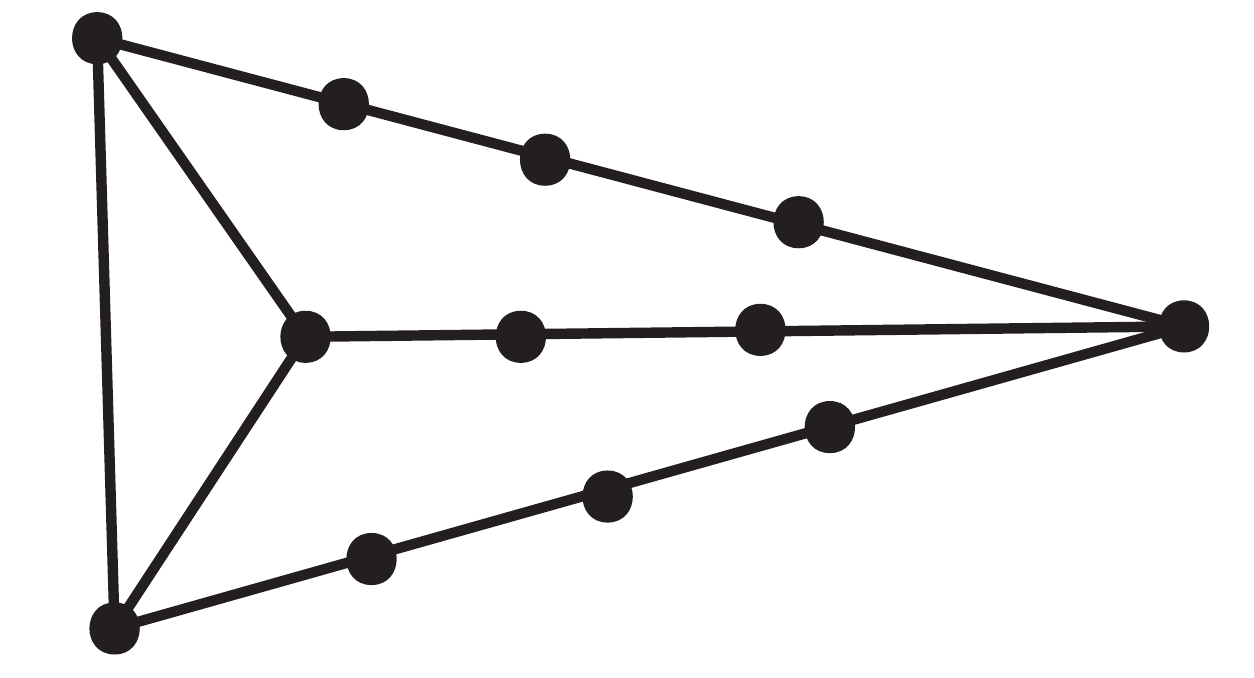}
	\end{center}
	\caption{The graph $SK_4^{3,4,4}$.}
	\label{SK4picture}
\end{figure}

\begin{lemma}\label{SK4bound} Let $s_1,s_2,s_3\in \mathbb{Z^+}$ and $x$ be a real number with $x\geq 2$. Then,
	$$\pi (SK_4^{s_1,s_2,s_3},\, x+1)\leq \frac{x-1}{x+1}\,x^{s_1+s_2+s_3+1} \left(1+\frac{2}{x^2}\right).$$
\end{lemma}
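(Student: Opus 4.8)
\textbf{Proof proposal for Lemma~\ref{SK4bound}.}

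The plan is to start from the identity \eqref{SK4formula}, which expresses $\pi(SK_4^{s_1,s_2,s_3},x)$ as a difference of two theta-graph chromatic polynomials, and then apply the shifted closed form of Lemma~\ref{thetashift} to both terms at $x+1$. Writing $\pi(\theta_{s_1+1,s_2,s_3+1},x+1)$ and $\pi(\theta_{s_1,s_2+1,s_3},x+1)$ via Lemma~\ref{thetashift}, both expressions carry the common prefactor $\tfrac{x}{x+1}$, and the leading monomial of each is $x^{(s_1+s_2+s_3+2)-1}=x^{s_1+s_2+s_3+1}$, so these top terms cancel in the difference. What remains inside the parentheses is a small signed combination of low-degree monomials in $x$ together with the $\pm(x-1)$ tails; I expect the difference to collapse to something of the form $\tfrac{x}{x+1}$ times $\big(\pm x^{s_i+1}\mp x^{s_i}\pm\cdots\big)$ plus bounded terms. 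The point is that after the leading cancellation the surviving error terms all have exponent at most $s_1+s_2+s_3-1$.

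Next I would bound the leftover combination. After the cancellation the dominant surviving piece should be a single term of order $x^{s_1+s_2+s_3-1}$ with coefficient $1$ (coming from the fact that one of the exponents $s_i+1$ or $s_i$ differs between the two thetas), plus finitely many terms of strictly smaller order and the constant-order tails. I would split into the same four cases as in Lemma~\ref{thetabound} according to how many of $s_1,s_2,s_3$ equal $1$ (since that changes which exponents $s_i+1$, $s_i$, etc.\ are "large" versus "small"), carry out the cancellation in each case, and check that the resulting expression is at most $x^{s_1+s_2+s_3-1}+2x^{s_1+s_2+s_3-3}$, i.e.\ that the factor in parentheses is at most $1+\tfrac{2}{x^2}$ after pulling out $x^{s_1+s_2+s_3+1}\cdot\tfrac{x-1}{x+1}$. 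Here one should note that $\tfrac{x}{x+1}=\tfrac{x-1}{x+1}\cdot\tfrac{x}{x-1}$, so the stated $\tfrac{x-1}{x+1}$ prefactor absorbs an extra $\tfrac{x}{x-1}$ which for $x\ge 2$ is close to $1$ and helps dominate the lower-order junk; the hypothesis $x\ge 2$ is exactly what makes the elementary monomial inequalities (each small term $\le$ the allotted share of $x^{s_1+s_2+s_3-3}$) go through.

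Alternatively, and perhaps more cleanly, I would invoke the bounds already proved rather than re-expanding: from \eqref{SK4formula} we have $\pi(SK_4^{s_1,s_2,s_3},x+1)\le \pi(\theta_{s_1+1,s_2,s_3+1},x+1)$ (dropping the nonnegative subtracted term is not quite legitimate since $\pi(\theta,x+1)\ge 0$ for $x\ge 1$, so the difference is at most the first term), and then apply Lemma~\ref{thetabound} to $\theta_{s_1+1,s_2,s_3+1}$, whose parameters include at least two entries that are $\ge 2$, landing in Case~1 or Case~2 of that lemma; this gives $\pi(SK_4^{s_1,s_2,s_3},x+1)\le \tfrac{x^{s_1+s_2+s_3+2}}{x+1}\big(1+\tfrac{3}{x^3}+\tfrac{1}{x^4}\big)$ or the analogous Case~2 bound. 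Comparing $\tfrac{x^{s_1+s_2+s_3+2}}{x+1}\big(1+\tfrac{3}{x^3}+\cdots\big)$ with the target $\tfrac{x-1}{x+1}x^{s_1+s_2+s_3+1}\big(1+\tfrac{2}{x^2}\big)$ reduces to the single-variable inequality $x\big(1+\tfrac{3}{x^3}+\tfrac{1}{x^4}\big)\le (x-1)\big(1+\tfrac{2}{x^2}\big)$, which unfortunately fails for large $x$, so the crude "drop a term" approach is too lossy and one genuinely needs the cancellation of leading terms. Thus the main obstacle is the bookkeeping in the first approach: correctly tracking signs through the four parity/size cases so that the $x^{s_1+s_2+s_3+1}$ terms cancel and the residual really is dominated by $\tfrac{x-1}{x+1}x^{s_1+s_2+s_3+1}\cdot\tfrac{2}{x^2}$ — a routine but error-prone computation that I would organize by the same case split used in Lemma~\ref{thetabound}.
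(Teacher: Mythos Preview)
Your first approach---compute $\pi(\theta_{s_1+1,s_2,s_3+1},x+1)-\pi(\theta_{s_1,s_2+1,s_3},x+1)$ via Lemma~\ref{thetashift} and bound the result---is exactly the route the paper takes, but you are missing two observations that make the paper's argument a three-line proof rather than a case analysis.

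First, if you actually carry out the subtraction you sketched, every surviving term pairs off as $x^{m+1}-x^{m}=x^{m}(x-1)$ (including the leading pair $x^{S+1}-x^{S}$ and the tail pair $(x-1)-(-(x-1))$), so a global factor of $x-1$ comes out cleanly. The exact result is
\[
\pi(SK_4^{s_1,s_2,s_3},x+1)=\frac{x(x-1)}{x+1}\Big(x^{S}+(-1)^{s_1+s_2+1}x^{s_3}+(-1)^{s_1+s_3+1}x^{s_2}+(-1)^{s_2+s_3+1}x^{s_1}+2(-1)^{S}\Big),
\]
where $S=s_1+s_2+s_3$. There is no need for the manoeuvre $\tfrac{x}{x+1}=\tfrac{x-1}{x+1}\cdot\tfrac{x}{x-1}$.

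Second, the case split you propose (by how many $s_i$ equal $1$, mimicking Lemma~\ref{thetabound}) is the wrong one; the signs here depend only on the parities of the pairwise sums $s_i+s_j$, not on whether any $s_i=1$. The paper's key observation is a parity argument: the three numbers $s_1+s_2,\ s_1+s_3,\ s_2+s_3$ sum to the even number $2S$, so they cannot all be odd, hence at least one of the three middle terms $(-1)^{s_i+s_j+1}x^{s_k}$ is negative. With one of those terms dropped and the other two bounded by $x^{S-2}$ (since each $s_k\le S-2$), and with $2(-1)^S\le 2\le x^{s_k}$ absorbed by the discarded negative term for $x\ge 2$, the bracket is at most $x^{S}(1+\tfrac{2}{x^2})$. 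No case analysis is needed. Your size-based split would eventually force you into parity sub-cases anyway, because without the parity observation the ``all signs positive'' configuration would give $3x+2\le 2x$ when $s_1=s_2=s_3=1$, which is false; it is precisely the parity constraint that rules that configuration out.
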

\begin{proof}
	Using \eqref{SK4formula} and Lemma~\ref{thetashift}, calculations show that $\pi ((SK_4)^{s_1,s_2,s_3},\, x+1)$ is equal to
	$$\frac{x(x-1)}{x+1}\left(x^{\sum\limits_{i=1}^3s_i}+(-1)^{s_1+s_2+1}x^{s_3}+(-1)^{s_1+s_3+1}x^{s_2}+(-1)^{s_2+s_3+1}x^{s_1}+2(-1)^{\sum\limits_{i=1}^3s_i}\right).$$
	
	Now, all of $s_1+s_2$, $s_1+s_3$, $s_2+s_3$ cannot be odd at the same time. So at least one of  $s_1+s_2$, $s_1+s_3$, $s_2+s_3$ is even. So this means that at least one of the terms $(-1)^{s_1+s_2+1}x^{s_3}$, $(-1)^{s_1+s_3+1}x^{s_2}$, $(-1)^{s_2+s_3+1}x^{s_1}$ is negative. Therefore it is easy to see that 
	
	$$x^{\sum\limits_{i=1}^3s_i}+(-1)^{s_1+s_2+1}x^{s_3}+(-1)^{s_1+s_3+1}x^{s_2}+(-1)^{s_2+s_3+1}x^{s_1}+2(-1)^{\sum\limits_{i=1}^3s_i}$$
	is at most
	$$ x^{\sum\limits_{i=1}^3s_i} \left(1+\frac{2}{x^2}\right)$$
for every real $x\geq 2$.	Thus the result follows.
\end{proof}

\subsubsection{A subdivision of $K_{3,t}$}

\begin{lemma}\label{k33formula}
Let $\{a,b,c\}$ and $\{v_1,v_2,\dots ,v_t\}$ be the bipartition of the graph $K_{3,t}$. Let $G$ be a subdivision of $K_{3,t}$ such that the edge $av_i$ (resp. $bv_i$ and $cv_i$) of $K_{3,t}$  is replaced with a path of size $a_i$ (resp. $b_i$ and $c_i$) for $i=1,\dots ,t.$ Then $\pi(G,x)$ is equal to
\begin{eqnarray*}
&&\frac{\prod_{i=1}^t\, \pi (\theta_{a_i+1,b_i,c_i},x)}{\left(x(x-1)\right)^{t-1}}+\frac{\prod_{i=1}^t\, \pi (\theta_{a_i,b_i+1,c_i},x)}{\left(x(x-1)\right)^{t-1}}+\frac{\prod_{i=1}^t\, \pi (\theta_{a_i,b_i,c_i+1},x)}{\left(x(x-1)\right)^{t-1}}\\
&&+\frac{\prod_{i=1}^t\, \pi (\theta_{a_i,b_i,c_i},x)}{x^{t-1}}+ \frac{\prod_{i=1}^t \, \pi(SK_4^{a_i,b_i,c_i},x)}{\left(x(x-1)(x-2)\right)^{t-1}}
\end{eqnarray*}
\end{lemma}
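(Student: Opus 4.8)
The plan is to compute $\pi(G,x)$ by conditioning on the colour pattern of the three branch vertices $a,b,c$ of the subdivision. Write $G=\bigcup_{i=1}^{t}G_i$, where $G_i$ is the induced subgraph carrying $v_i$, the three branch vertices, and the three internally disjoint $v_i$--$a$, $v_i$--$b$, $v_i$--$c$ paths of sizes $a_i,b_i,c_i$; thus each $G_i$ is a subdivision of $K_{1,3}$ with centre $v_i$, and $G_i\cap G_j=\{a,b,c\}$ for $i\neq j$. Since $\{a,b,c\}$ is independent in $G$, for each of the five set partitions $P$ of $\{a,b,c\}$ let $G_P$ denote the graph obtained from $G$ by identifying the vertices lying in a common block of $P$ and then adding all edges between the resulting blocks, so that the blocks span a clique. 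For a fixed integer $x\geq 0$ there is a bijection between the proper $x$-colourings of $G$ whose colour pattern on $\{a,b,c\}$ is exactly $P$ and the proper $x$-colourings of $G/P$ that are injective on the blocks; the latter are precisely the proper $x$-colourings of $G_P$. Summing over $P$ and using that two polynomials agreeing on all nonnegative integers coincide gives
$$\pi(G,x)=\sum_{P}\pi(G_P,x).$$
(Equivalently, this identity is obtained by iterating the Addition--Contraction Formula over the three non-edges $ab$, $ac$, $bc$.)

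It remains to evaluate the five summands. In each case the merged branch vertices of $G_P$ induce a clique, and $G_P$ is the corresponding clique-sum of the $t$ local graphs $(G_i)_P$ along that clique, so the Complete Cutset Theorem reduces $\pi(G_P,x)$ to $\prod_{i}\pi((G_i)_P,x)$ divided by the appropriate power of $(x)_{\downarrow r}$. The local graphs are identified as follows. If $P$ is the partition into singletons, then $(G_i)_P$ is $G_i$ together with the triangle $abc$; this is a subdivision of $K_4$ on $\{v_i,a,b,c\}$ in which the three edges at $v_i$ are subdivided into paths of sizes $a_i,b_i,c_i$ and the triangle $abc$ is left undivided, i.e.\ $(G_i)_P\cong SK_4^{a_i,b_i,c_i}$, with $r=3$. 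If $P$ merges, say, $b$ and $c$ into a vertex $w$ and keeps $a$ (adding the edge $wa$), then in $(G_i)_P$ the vertices $w$ and $v_i$ are joined by three internally disjoint paths: the old $b$- and $c$-legs of sizes $b_i,c_i$, and the old $a$-leg followed by the edge $aw$, of size $a_i+1$; hence $(G_i)_P\cong\theta_{a_i+1,b_i,c_i}$, with $r=2$. The cases merging $\{a,c\}$ and $\{a,b\}$ give $\theta_{a_i,b_i+1,c_i}$ and $\theta_{a_i,b_i,c_i+1}$ respectively, again with $r=2$. Finally, if $P$ merges all of $a,b,c$ into a single vertex $w$, then $(G_i)_P$ is simply the theta graph $\theta_{a_i,b_i,c_i}$ between $w$ and $v_i$, with $r=1$. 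In every case the fact that the displayed three paths are internally disjoint is inherited directly from the subdivision structure.

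Substituting these identifications together with the denominators $\bigl(x(x-1)(x-2)\bigr)^{t-1}$, $\bigl(x(x-1)\bigr)^{t-1}$ (three times) and $x^{t-1}$ supplied by the Complete Cutset Theorem yields exactly the five-term expression in the statement. The points that need care are the bookkeeping in the second paragraph --- making sure that the branch vertex which is \emph{not} merged is the one whose leg-length picks up the $+1$ --- and a clean justification of the pattern decomposition in the first paragraph; the rest is a routine application of the Complete Cutset Theorem. A minor technicality is that when some of $a_i,b_i,c_i$ equal $1$ the local graphs may carry parallel edges, but this is harmless, since the chromatic-polynomial formulas for theta graphs and for $SK_4^{s_1,s_2,s_3}$ used above are precisely the ones established earlier.
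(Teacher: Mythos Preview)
Your proof is correct and follows essentially the same approach as the paper. The paper obtains the five terms by iterating the Addition--Contraction Formula on the non-edges $ab$, $bc$, $ac$ (in a specific order), arriving at the graphs $A_1^1=G+ab+bc+ac$, $A_1^2=(G+ab+bc)/ac$, $A_2=(G+ab)/bc$, $B_1=(G/ab)+uc$, $B_2=(G/ab)/uc$, and then applies the Complete Cutset Theorem to each; your decomposition by the colour pattern on $\{a,b,c\}$ is exactly the same computation organised by set partition rather than by a sequential tree of add/contract steps, and you yourself note this equivalence.
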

\begin{proof}
We apply the addition contraction formula successively. Let $A=G+ab$ and $B=G/ ab$. So, $$\pi(G,x)=\pi(A,x)+\pi(B,x).$$ Let $u$ be the vertex of $B$ which is obtained by contracting $a$ and $b$.  $B_1=B+uc$ and $B_2=B/ uc$. So, $$\pi(B,x)=\pi(B_1,x)+\pi(B_2,x).$$ Let $A_1=A+bc$ and $A_2=A/bc$. So,
$$\pi(A,x)=\pi(A_1,x)+\pi(A_2,x.)$$
Let $A_1^1=A_1+ac$ and $A_1^2=A_1/ ac$. So,
$$\pi(A_1,x)=\pi(A_1^1,x)+\pi(A_1^2,x).$$
Hence, we obtain that
$$\pi(G,x)=\pi(A_1^1,x)+\pi(A_1^2,x)+\pi(A_2,x)+\pi(B_1,x)+\pi(B_2,x).$$
Now we use the Complete Cutset Theorem to find the chromatic polynomials of the graphs $A_1^1$, $A_1^2$, $A_2$, $B_1$, $B_2$.
Observe that $A_1^1$ is the $3$-clique sum of  $(SK_4)^{a_1,b_1,c_1},\dots , (SK_4)^{a_t,b_t,c_t}$. Hence, 
$$\pi(A_1^1,x)=\frac{\prod_{i=1}^t \, \pi(SK_4^{a_i,b_i,c_i},x)}{\left(x(x-1)(x-2)\right)^{t-1}}$$
The graph $A_1^2$ is the $2$-clique sum of $\theta_{a_1,b_1+1,c_1},\, \dots \, ,\theta_{a_t,b_t+1,c_t}$, so
$$\pi(A_1^2,x)=\frac{\prod_{i=1}^t\, \pi (\theta_{a_i,b_i+1,c_i},x)}{\left(x(x-1)\right)^{t-1}}$$
Similarly, $A_2$ is the $2$-clique sum of $\theta_{a_1+1,b_1,c_1},\, \dots \, ,\theta_{a_t+1,b_t,c_t}$; $B_1$ is the $2$-clique sum of $\theta_{a_1,b_1,c_1+1},\, \dots \, ,\theta_{a_t,b_t,c_t+1}$; $B_2$ is the $1$-clique sum of $\theta_{a_1,b_1,c_1},\, \dots \, ,\theta_{a_t,b_t,c_t}$. Therefore,
$$\pi(A_2,x)=\frac{\prod_{i=1}^t\, \pi (\theta_{a_i+1,b_i,c_i},x)}{\left(x(x-1)\right)^{t-1}}$$
$$\pi(B_1,x)=\frac{\prod_{i=1}^t\, \pi (\theta_{a_i,b_i,c_i+1},x)}{\left(x(x-1)\right)^{t-1}}$$
$$\pi(B_2,x)=\frac{\prod_{i=1}^t\, \pi (\theta_{a_i,b_i,c_i},x)}{x^{t-1}}$$
Thus, the result follows.
\end{proof}

\begin{lemma}\label{k33bound}
Let $\{a,b,c\}$ and $\{v_1,v_2,\dots ,v_t\}$ be the bipartition of the graph $K_{3,t}$. Let $G$ be a subdivision of $K_{3,t}$ such that the edge $av_i$ (resp. $bv_i$ and $cv_i$) of $K_{3,t}$  is replaced with a path of size $a_i$ (resp. $b_i$ and $c_i$) for $i=1,\dots ,t.$ Define
$$F(x,t)=3\left(1+\frac{1}{x}+\frac{1}{x^3}+\frac{1}{x^4}\right)^t+\frac{1}{x}\left(1+\frac{2}{x}+\frac{1}{x^2}\right)^t+(x-1)\left(1+\frac{2}{x^2}\right)^t.$$
Then for every real $x\geq 2$,
$$\pi(G,x+1)\leq \frac{x^{n+2t-2}}{(x+1)^{2t-1}}\,F(x,t).$$
\end{lemma}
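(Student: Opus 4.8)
The plan is to expand $\pi(G,x)$ into the five terms furnished by Lemma~\ref{k33formula}, substitute $x\mapsto x+1$ throughout, and bound each of the five products factor by factor using the estimates for theta graphs and for $SK_4$ already established. First I would pin down the vertex count: writing $M=\sum_{i=1}^t(a_i+b_i+c_i)$, each replaced edge $av_i$ contributes $a_i-1$ internal vertices (similarly $b_i-1$ and $c_i-1$ for $bv_i$ and $cv_i$), so $n=3+t+(M-3t)=M-2t+3$, i.e.\ $M+1=n+2t-2$. This last quantity is precisely the exponent of $x$ that must appear in the target bound, so the whole argument reduces to checking that the powers of $x$ and of $x+1$ come out correctly in each term.

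For the first three terms the factors are of the form $\pi(\theta_{a_i+1,b_i,c_i},x+1)$ (or the $b$- or $c$-shifted versions), so each has an index that is at least $2$, and Lemma~\ref{thetalastbound} applies (valid since $x\ge 2>\sqrt2$): each factor is at most $\frac{x^{(a_i+b_i+c_i)+1}}{x+1}\left(1+\frac1x+\frac1{x^3}+\frac1{x^4}\right)$. Taking the product over $i$ and dividing by the clique-sum denominator $(x(x-1))^{t-1}$ evaluated at $x+1$, namely $((x+1)x)^{t-1}$, turns each of these three terms into $\frac{x^{n+2t-2}}{(x+1)^{2t-1}}\left(1+\frac1x+\frac1{x^3}+\frac1{x^4}\right)^t$, which accounts for the summand $3(\,\cdots)^t$ in $F(x,t)$.

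For the fourth term the factors $\pi(\theta_{a_i,b_i,c_i},x+1)$ need not have an index at least $2$, so I would instead use the general bound of Lemma~\ref{thetabound} together with the elementary observation that, for $x\ge 2$, each of the four expressions defining $G_{a,b,c}(x)$ is at most $1+\frac2x+\frac1{x^2}$ (for instance the ``no index equal to $1$'' case reduces to $2x^3+x^2\ge 3x+1$). Hence every factor is at most $\frac{x^{a_i+b_i+c_i}}{x+1}\left(1+\frac2x+\frac1{x^2}\right)$, and after dividing by the $1$-clique-sum denominator $(x+1)^{t-1}$ this term is at most $\frac1x\cdot\frac{x^{n+2t-2}}{(x+1)^{2t-1}}\left(1+\frac2x+\frac1{x^2}\right)^t$. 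For the fifth term, Lemma~\ref{SK4bound} bounds each $\pi(SK_4^{a_i,b_i,c_i},x+1)$ by $\frac{x-1}{x+1}\,x^{a_i+b_i+c_i+1}\left(1+\frac2{x^2}\right)$; dividing the product by $(x(x-1)(x-2))^{t-1}$ evaluated at $x+1$, namely $((x+1)x(x-1))^{t-1}$, collapses all but one power of $x-1$ and yields $(x-1)\frac{x^{n+2t-2}}{(x+1)^{2t-1}}\left(1+\frac2{x^2}\right)^t$. Adding the five bounds gives exactly $\frac{x^{n+2t-2}}{(x+1)^{2t-1}}F(x,t)$.

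The main thing to be careful about is purely the bookkeeping: in each of the five terms one must verify that (numerator exponent of $x$) minus (exponent of $x$ coming from the clique-sum denominator) equals $n+2t-2$, and that the powers of $x+1$ combine to $2t-1$; there is no conceptual obstacle, since all the substantive work is in Lemmas~\ref{thetabound}, \ref{thetalastbound} and~\ref{SK4bound}. The only genuinely new (but elementary) input is the uniform inequality $G_{a,b,c}(x)\le 1+\frac2x+\frac1{x^2}$ for $x\ge 2$, needed to cover the possibility that an unshifted theta factor in the fourth term has all three indices equal to $1$.
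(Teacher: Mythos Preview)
Your proposal is correct and follows essentially the same approach as the paper: apply Lemma~\ref{k33formula}, bound the three ``shifted'' theta terms via Lemma~\ref{thetalastbound}, the unshifted theta term via Lemma~\ref{thetabound} together with the elementary inequality $G_{a,b,c}(x)\le 1+\tfrac{2}{x}+\tfrac{1}{x^2}$ (which the paper obtains by combining Lemmas~\ref{thetabound} and~\ref{thetalastbound} with the observation $1+\tfrac1x+\tfrac1{x^3}+\tfrac1{x^4}\le 1+\tfrac2x+\tfrac1{x^2}$), and the $SK_4$ term via Lemma~\ref{SK4bound}, then verify the exponent bookkeeping using $\sum_i(a_i+b_i+c_i)=n+2t-3$. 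Your direct case check of $G_{a,b,c}(x)\le 1+\tfrac2x+\tfrac1{x^2}$ is a minor rephrasing of the paper's argument, not a different route.
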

\begin{proof}
By Lemma~\ref{thetalastbound}, each of $\pi(\theta_{a_i+1,b_i,c_i}, x+1)$, $\pi(\theta_{a_i,b_i+1,c_i}, x+1)$, $\pi(\theta_{a_i,b_i,c_i+1}, x+1)$ is at most $$\frac{x^{a_i+b_i+c_i+1}}{x+1}\left(1+\frac{1}{x}+\frac{1}{x^3}+\frac{1}{x^4}\right)$$ for every real $x\geq \sqrt{2}$. Also, $1+\frac{2}{x}+\frac{1}{x^2}\geq 1+\frac{1}{x}+\frac{1}{x^3}+\frac{1}{x^4} $ holds for all $x\geq 1$. Hence Lemmas~\ref{thetabound} and \ref{thetalastbound} yield
$$\pi(\theta_{a_i,b_i,c_i},x+1)\leq \frac{x^{a_i+b_i+c_i}}{x+1}\left(1+\frac{2}{x}+\frac{1}{x^2}\right).$$
By Lemma~\ref{SK4bound}, we also have
$$\pi(SK_4^{a_i,b_i,c_i},x+1)\leq \frac{x-1}{x+1}x^{a_i+b_i+c_i+1}\left(1+\frac{2}{x^2}\right)$$
for every real $x\geq 2$. Observe that 
$$n+2t-3=\sum\limits_{i=1}^t(a_i+b_i+c_i).$$
Hence, $$x^{n+2t-3}=\prod\limits_{i=1}^t\,x^{a_i+b_i+c_i}.$$ Now by Lemma~\ref{k33formula}, for every real $x\geq 2$, we get
\begin{eqnarray*}
\pi(G,x+1) &\leq & x^{n+2t-2}\left(\frac{3}{(x+1)^{2t-1}}\left(1+\frac{1}{x}+\frac{1}{x^3}+\frac{1}{x^4}\right)^t\right)\\ && + x^{n+2t-2}\left(\frac{1}{x(x+1)^{2t-1}}\left(1+\frac{2}{x}+\frac{1}{x^2}\right)^t\right)\\
&& + x^{n+2t-2}\left(\frac{(x-1)}{(x+1)^{2t-1}}\left(1+\frac{2}{x^2}\right)^t\right)\\
&=& \frac{x^{n+2t-2}}{(x+1)^{2t-1}}\,F(x,t).
\end{eqnarray*}
\end{proof}

\noindent \textbf{Proof of Lemma~\ref{K33son}}.
We shall show that
$$\pi(G,x+1)<(x+1)_{\downarrow 4}\,x^{n-4}$$
holds for every real number $x\geq 2.95$. Take $t=10$ for the rest of the proof. Recall that
$$F(x,t)=3\left(1+\frac{1}{x}+\frac{1}{x^3}+\frac{1}{x^4}\right)^t+\frac{1}{x}\left(1+\frac{2}{x}+\frac{1}{x^2}\right)^t+(x-1)\left(1+\frac{2}{x^2}\right)^t.$$
By Lemma~\ref{k33bound}, it suffices to show that 
$$\frac{x^{n+2t-2}}{(x+1)^{2t-1}}\,F(x,t)< (x+1)_{\downarrow 4}\,x^{n-4}$$ which is equivalent to showing that 
$$x^{2t+2}\, F(x,t)< (x+1)^{2t-1}\, (x+1)_{\downarrow 4}.$$
Calculations show that $x^{2t+2}\,F(x,t)$
is equal to 
$$x^{-2t+1}\left(3x(x^4+x^3+x+1)^t+(x^4+2x^3+x^2)^t+x(x-1)(x^4+2x^2)^t\right).$$
So we shall show that 
$$q(x):=3x(x^4+x^3+x+1)^t+(x^4+2x^3+x^2)^t+x(x-1)(x^4+2x^2)^t$$
is less than
$r(x):=x^{2t-1}(x+1)^{2t-1}(x+1)_{\downarrow 4}$  for all $x\geq 2.95$.
Let 
$$p(x)=r(x)-q(x).$$
Calculations show that for $t=10$, the polynomial $p(x)$ has positive leading coefficient and the largest real root of $p(x)$ is $2.9408\dots$. Thus the result follows.

\subsection{Proof of Lemma~\ref{cactusson}}

\begin{lemma}\label{cactusformula}
	Let $G$ be a cactus graph with $t$ edges and $p$ cycles $C_1,\dots , C_p$ where $|V(C_i)|=n_i$ for $i=1,\dots ,p$. Then
	$$\pi(G,x)=\frac{(x-1)^{t+p}}{x^{p-1}}\, \prod\limits_{i=1}^p ((x-1)^{n_i-1}+(-1)^{n_i})$$
\end{lemma}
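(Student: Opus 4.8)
The plan is to prove this by induction on the number of blocks of $G$, using the block decomposition formula \eqref{blocks} together with the known formula for the chromatic polynomial of a cycle. First I would set up the base case: if $G$ has a single block, then either $G$ is a single edge ($t=1$, $p=0$), in which case $\pi(G,x) = x(x-1)$ and the claimed formula reads $\frac{(x-1)^{1}}{x^{-1}} \cdot (\text{empty product}) = x(x-1)$, so it checks out; or $G$ is a single cycle $C_{n_1}$ ($t = n_1$, $p=1$), in which case the formula reads $\frac{(x-1)^{n_1+1}}{x^{0}}\cdot\frac{(x-1)^{n_1-1}+(-1)^{n_1}}{(x-1)} = (x-1)^{n_1} + (-1)^{n_1}(x-1)$, which is exactly $\pi(C_{n_1},x)$. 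Good.

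For the inductive step, suppose $G$ has blocks $B_1,\dots,B_m$ with $m\geq 2$. Pick a leaf block $B_m$ (one containing only a single cut vertex of $G$) and let $G'$ be the graph obtained by deleting the vertices of $B_m$ other than that cut vertex; then $G'$ is a cactus with blocks $B_1,\dots,B_{m-1}$. By \eqref{blocks} we have $\pi(G,x) = \frac{1}{x^{m-1}}\prod_{i=1}^m \pi(B_i,x) = \frac{1}{x}\cdot\pi(G',x)\cdot\pi(B_m,x)$ (using \eqref{blocks} also for $G'$ with $m-1$ blocks). Now I split into two cases according to whether $B_m$ is an edge or a cycle. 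If $B_m$ is an edge, then $G'$ has $t-1$ edges and the same $p$ cycles with the same sizes, so by induction $\pi(G',x) = \frac{(x-1)^{t-1+p}}{x^{p-1}}\prod_{i=1}^p((x-1)^{n_i-1}+(-1)^{n_i})$, and multiplying by $\pi(B_m,x)/x = x(x-1)/x = (x-1)$ gives exactly the claimed formula. If $B_m$ is a cycle $C_p$ of length $n_p$, then $G'$ has $t - n_p$ edges and cycles $C_1,\dots,C_{p-1}$, so by induction $\pi(G',x) = \frac{(x-1)^{t-n_p+p-1}}{x^{p-2}}\prod_{i=1}^{p-1}((x-1)^{n_i-1}+(-1)^{n_i})$; multiplying by $\pi(C_{n_p},x)/x = \frac{(x-1)^{n_p}+(-1)^{n_p}(x-1)}{x} = \frac{(x-1)}{x}\big((x-1)^{n_p-1}+(-1)^{n_p}\big)$ and collecting powers of $(x-1)$ and $x$ yields $\frac{(x-1)^{t+p}}{x^{p-1}}\prod_{i=1}^p((x-1)^{n_i-1}+(-1)^{n_i})$, as desired.

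This argument is almost entirely bookkeeping; the only mild subtlety is making sure the exponent arithmetic works in the cycle case (the $t$ and $p$ both change, and one must verify $(t-n_p+p-1) + 1 + (n_p - 1) = t+p$ for the power of $(x-1)$ and $(p-2) + 1 = p-1$ for the power of $x$), but both identities are immediate. The main thing to be careful about is the edge case $p=0$ and the transition $p=1 \to p=0$ versus $p \geq 1$, i.e.\ that the empty product and the exponent $p-1 = -1$ are handled consistently — this is why I include the single-edge base case explicitly. One alternative would be to induct directly on $p$ by repeatedly applying the Complete Cutset Theorem to split off one cycle at a time after first contracting all bridge-blocks, but the block-by-block induction above is cleaner and uses only \eqref{blocks} and the cycle polynomial, both already recorded in the excerpt.
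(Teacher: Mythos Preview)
Your induction scheme is fine, but you have misread the parameter $t$: in the statement (as the paper's own proof makes explicit) $t$ is the number of \emph{edge-blocks} (bridges), not the total number of edges of $G$. The paper applies \eqref{blocks} with $t+p$ blocks, writing $\pi(G,x)=\frac{1}{x^{t+p-1}}(x(x-1))^{t}\prod_{i}\pi(C_i,x)$, so the $t$ edge-blocks contribute $(x(x-1))^t$. Under your reading the formula is simply false already for $G=C_3$: with ``$t=3$'' and $p=1$ your right-hand side is $(x-1)^4\big((x-1)^2-1\big)$, not $x(x-1)(x-2)$. This is also why you had to slip in an unexplained $1/(x-1)$ in your cycle base case, and why your own exponent check $(t-n_p+p-1)+1+(n_p-1)$ actually equals $t+p-1$, not $t+p$.

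With $t$ interpreted correctly the bookkeeping repairs itself: for a single cycle $t=0$, $p=1$, and the formula reads $(x-1)\big((x-1)^{n_1-1}+(-1)^{n_1}\big)=\pi(C_{n_1},x)$ on the nose; in the inductive cycle case, deleting a cycle-block leaves $t$ bridges (unchanged) and $p-1$ cycles, so the induction hypothesis gives exponent $t+(p-1)$ on $(x-1)$, and multiplying by $\frac{x-1}{x}\big((x-1)^{n_p-1}+(-1)^{n_p}\big)$ bumps this to $t+p$ and the $x$-exponent from $p-2$ to $p-1$, exactly as required. So your argument becomes correct once $t$ is read as the number of bridges.

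Compared with the paper, your corrected proof is just an inductive unwinding of the same one-line computation: the paper plugs all $t+p$ blocks into \eqref{blocks} at once and simplifies, whereas you peel off one leaf block at a time. Neither approach buys anything the other does not.
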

\begin{proof}
	By the formula given in equation \eqref{blocks},
	\begin{eqnarray*}
	\pi(G,x) &=& \frac{1}{x^{t+p-1}}\, (x(x-1))^t\,  \prod\limits_{i=1}^p \pi(C_i,x) \\
	&=& \frac{(x-1)^t}{x^{p-1}}\, \prod\limits_{i=1}^p \pi(C_i,x).
	\end{eqnarray*}
	Since $\pi(C_i,x)=(x-1)^{n_i}+(-1)^{n_i}(x-1)$, the latter simplifies to
	$$\frac{(x-1)^{t+p}}{x^{p-1}}\, \prod\limits_{i=1}^p ((x-1)^{n_i-1}+(-1)^{n_i}).$$
	Thus the result follows.
	\end{proof}
	
	\begin{lemma}\label{generalproductbound}
		Let $p, N_1,\dots ,N_p\in \mathbb{Z}^{+}$ be such that $N=\sum\limits_{i=1}^{p}N_i$ and $N_1,\dots ,N_p\geq 3$. Then,
		$$\prod\limits_{i=1}^{p} (x^{N_i}+1) \, \leq x^{N-3p} \, \left(x+\frac{1}{3x^2}\right)^{3p}$$
		for every real $x\geq 1$.
	\end{lemma}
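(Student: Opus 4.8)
The plan is to divide everything through by $x^N$ and thereby reduce the statement to an elementary one-variable inequality. First I would record the two rewritings
$$\prod_{i=1}^{p}\bigl(x^{N_i}+1\bigr)=x^{N}\prod_{i=1}^{p}\bigl(1+x^{-N_i}\bigr),\qquad
x^{N-3p}\Bigl(x+\tfrac{1}{3x^2}\Bigr)^{3p}=x^{N}\Bigl(1+\tfrac{1}{3x^3}\Bigr)^{3p},$$
where the first uses $N=\sum_i N_i$ and the second factors an $x$ out of each of the $3p$ copies of $x+\tfrac{1}{3x^2}$. Since $x\geq 1$ we have $x^{N}>0$, so the asserted inequality is equivalent to
$$\prod_{i=1}^{p}\bigl(1+x^{-N_i}\bigr)\ \leq\ \Bigl(1+\tfrac{1}{3x^3}\Bigr)^{3p}.$$

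Next I would use the hypotheses $x\geq 1$ and $N_i\geq 3$: they give $x^{-N_i}\leq x^{-3}$, hence $1+x^{-N_i}\leq 1+x^{-3}$ for each $i$, and multiplying these over $i=1,\dots,p$ yields $\prod_{i=1}^{p}(1+x^{-N_i})\leq (1+x^{-3})^{p}$. It therefore suffices to prove $(1+x^{-3})^{p}\leq (1+\tfrac{1}{3x^3})^{3p}$, and because $p$ is a positive integer this in turn follows from the single-factor inequality $1+x^{-3}\leq (1+\tfrac{1}{3x^3})^{3}$.

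Finally, setting $y=x^{-3}>0$, the binomial expansion gives
$$\Bigl(1+\tfrac{y}{3}\Bigr)^{3}=1+y+\tfrac{y^{2}}{3}+\tfrac{y^{3}}{27}\ \geq\ 1+y,$$
since the last two terms are nonnegative, and this is exactly the required inequality. Chaining the three steps back together proves the lemma. There is no real obstacle in this argument; the only points that need care are that the reduction $x^{-N_i}\leq x^{-3}$ genuinely uses $x\geq 1$, and that the final binomial expansion holds for every $y>0$, so in fact no restriction beyond $x\geq 1$ is needed.
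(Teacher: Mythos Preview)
Your proof is correct. Both the paper and you effectively first reduce to the inequality $(1+x^{-3})^{p}\leq\bigl(1+\tfrac{1}{3x^3}\bigr)^{3p}$: the paper does so by expanding $\prod_i(x^{N_i}+1)$ over subsets and bounding each monomial, while you factor $x^{N_i}$ out of each bracket and use $x^{-N_i}\leq x^{-3}$; these amount to the same thing. The genuine difference is the last step. The paper expands both sides binomially and compares term by term via the combinatorial inequality $\binom{p}{i}\leq 3^{-i}\binom{3p}{i}$. You instead take the $p$-th root and reduce to the single-variable Bernoulli-type inequality $1+y\leq(1+y/3)^{3}$ for $y=x^{-3}\geq 0$, which is immediate from the binomial expansion. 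Your route is shorter and avoids the auxiliary binomial-coefficient estimate; the paper's route has the minor advantage of making explicit how each term on the left is dominated by a specific term on the right.
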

	\begin{proof}
\begin{eqnarray*}
	\prod\limits_{i=1}^{p} (x^{N_i}+1) &\leq & \sum\limits_{i=0}^{p} {p\choose i}\,x^{N-3i}\\
	&=& x^{N-3p} \sum\limits_{i=0}^p {p\choose i} x^{3p-3i}\\
	&\leq & x^{N-3p}\, \sum\limits_{i=0}^{3p} {3p\choose i} x^{3p-i}\left(\frac{1}{3x^2}\right)^i\\
	&= & x^{N-3p}\, \left(x+\frac{1}{3x^2}\right)^{3p}
\end{eqnarray*}	
where the last inequality holds since
$${p\choose i}\leq \frac{1}{3^i}{3p\choose i}$$
for all $i=0,\dots ,p$.
	\end{proof}
	
	\begin{lemma}\label{cactusbound}
		Let $G$ be a cactus graph of order $n$ with $t$ edges and $p$ cycles $C_1,\dots , C_p$ where $|V(C_i)|=n_i$ for  $i=1,\dots ,p$. Then,
		
		$$\pi(G,x+1)\leq \frac{x^{n-8p-1}\, (3x^3+1)^{3p}}{3^{3p}\,(x+1)^{p-1}}$$
		for every real $x\geq 1$.
		\end{lemma}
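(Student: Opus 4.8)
The plan is to work directly from the closed-form expression for $\pi(G,x)$ furnished by Lemma~\ref{cactusformula}, and to reorganize the power of $x$ so that it is measured against the order $n$ rather than against the number of edge-blocks $t$. Evaluating Lemma~\ref{cactusformula} at $x+1$ gives
$$\pi(G,x+1)=\frac{x^{t+p}}{(x+1)^{p-1}}\prod_{i=1}^{p}\bigl(x^{n_i-1}+(-1)^{n_i}\bigr).$$
Since $G$ is connected with $t$ bridge-blocks and $p$ cycle-blocks $C_1,\dots,C_p$, and a connected graph with blocks $B_1,\dots,B_b$ has $1+\sum_{j=1}^{b}(|V(B_j)|-1)$ vertices, we obtain $n=1+t+\sum_{i=1}^{p}(n_i-1)$ (equivalently: compare degrees of the two sides above, the left one having degree $n$). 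Hence $t+p=(n-1)+\sum_{i=1}^{p}(2-n_i)$, so $x^{t+p}=x^{n-1}\prod_{i=1}^{p}x^{2-n_i}$, and the formula rewrites as
$$\pi(G,x+1)=\frac{x^{n-1}}{(x+1)^{p-1}}\prod_{i=1}^{p}\Bigl(x+(-1)^{n_i}x^{2-n_i}\Bigr).$$

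Next I would estimate each factor separately. For every real $x\geq 1$ one has $x+(-1)^{n_i}x^{2-n_i}\leq x+x^{-2}$: if $n_i$ is odd the term $(-1)^{n_i}x^{2-n_i}$ is negative, so the factor is at most $x$; if $n_i$ is even then $C_i$ is an even cycle, hence $n_i\geq 4$, so $n_i-2\geq 2$ and $x^{2-n_i}\leq x^{-2}$. Moreover $x+x^{-2}\leq x+x^{-2}+\tfrac13x^{-5}+\tfrac1{27}x^{-8}=(3x^3+1)^3/(27x^{8})$ for $x\geq 1$, and all factors in sight are nonnegative for $x\geq 1$, so multiplying over $i=1,\dots,p$ gives
$$\pi(G,x+1)\leq\frac{x^{n-1}}{(x+1)^{p-1}}\left(\frac{(3x^3+1)^3}{27x^{8}}\right)^{p}=\frac{x^{n-8p-1}(3x^3+1)^{3p}}{3^{3p}(x+1)^{p-1}},$$
which is exactly the claimed bound. (The product step could instead be funneled through Lemma~\ref{generalproductbound}, but the reorganization above already yields the bound in closed form with no loss.)

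The one point that needs care is the odd cycles, and especially the triangles. The temptation is to drop the sign and use $x^{n_i-1}+(-1)^{n_i}\leq x^{n_i-1}+1$; but for an odd cycle the true factor is $x^{n_i-1}-1$, and replacing it by $x^{n_i-1}+1$ costs a spurious factor of order $x^{\Theta(p)}$, which already makes the resulting estimate false; moreover a triangle has $n_i-1=2$, below the threshold needed to apply Lemma~\ref{generalproductbound} to the product. Keeping the sign $(-1)^{n_i}$ and observing that it equals $-1$ precisely in the small cases that would otherwise be problematic is what makes the per-factor inequality $x+(-1)^{n_i}x^{2-n_i}\leq x+x^{-2}$ go through uniformly in $n_i\geq 3$. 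After that, the only remaining point is the one-variable identity $(3x^3+1)^3=27x^9+27x^6+9x^3+1\geq 27x^9+27x^6=27x^{8}\,(x+x^{-2})$, which is immediate for $x\geq 1$.
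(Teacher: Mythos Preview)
Your argument is correct and, in fact, cleaner than the paper's. The paper first separates off the triangles (the cycles with $n_i=3$), bounds their factors by $x^{n_i-1}$, bounds the remaining factors crudely by $x^{n_i-1}+1$, and then invokes Lemma~\ref{generalproductbound} (a binomial-coefficient estimate requiring $N_i\geq 3$) to control $\prod_{i>l}(x^{n_i-1}+1)$; a short chain of algebraic manipulations then collapses everything to the stated bound. Your route bypasses both the case split and Lemma~\ref{generalproductbound}: by normalizing each factor as $x+(-1)^{n_i}x^{2-n_i}$ and using the parity observation (odd $n_i$ gives a negative correction; even $n_i$ forces $n_i\geq 4$), you get the uniform per-factor bound $x+x^{-2}$, and the elementary identity $(3x^3+1)^3=27x^8\bigl(x+x^{-2}\bigr)+9x^3+1$ finishes at once. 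The only minor quibble is with your closing commentary: replacing $x^{n_i-1}-1$ by $x^{n_i-1}+1$ does not by itself cost a factor of order $x^{\Theta(p)}$ (the ratio tends to~$1$), and indeed the paper makes exactly this replacement for all non-triangle cycles; the genuine obstruction you identify is that triangles have $n_i-1=2<3$, which blocks a direct appeal to Lemma~\ref{generalproductbound}. This does not affect the validity of your proof.
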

		
		\begin{proof}
			Assume that exactly $l$ of the cycles $C_1, \dots , C_p$ are equal to $C_3$ where $0\leq l\leq p$. Without loss we may assume $n_1,\dots ,n_l=3$ and $n_{l+1},\dots ,n_p\geq 4$. Also observe that
			$$n=t-p+1+\sum\limits_{i=1}^p n_i$$
			holds. Now,
			\begin{eqnarray}
			\pi(G,x+1) &= & \frac{x^{t+p}}{(x+1)^{p-1}} \, \prod\limits_{i=1}^p(x^{n_i-1}+(-1)^{n_i})  \label{first} \\
			&=&  \frac{x^{t+p}}{(x+1)^{p-1}} \, \prod\limits_{i=1}^l(x^{n_i-1}-1)\, \prod\limits_{i=l+1}^p(x^{n_i-1}+(-1)^{n_i}) \label{second}\\
			&\leq & \frac{x^{t+p}}{(x+1)^{p-1}} \, \prod\limits_{i=1}^lx^{n_i-1}\,\prod\limits_{i=l+1}^p(x^{n_i-1}+1) \label{third} \\
			&\leq & \frac{x^{t+p}}{(x+1)^{p-1}} \ x^{\sum\limits_{i=1}^l(n_i-1)}\,x^{\left(\sum\limits_{i=l+1}^p(n_i-1)\right)-3(p-l)}\,\left(x+\frac{1}{3x^2}\right)^{3(p-l)} \label{new} \\
			&\leq & \frac{x^{t+p}}{(x+1)^{p-1}} \ x^{\sum\limits_{i=1}^l(n_i-1)}\,x^{\left(\sum\limits_{i=l+1}^p(n_i-1)\right)-3p}\,\left(x+\frac{1}{3x^2}\right)^{3p} \label{fourth} \\
			& = & \frac{x^{t+p}}{(x+1)^{p-1}} \, x^{\left(\sum\limits_{i=1}^p (n_i-1)\right)-3p}\, \left(x+\frac{1}{3x^2}\right)^{3p} \label{fifth} \\
			&=& \frac{x^{n-2p-1}}{(x+1)^{p-1}}\,  \left(x+\frac{1}{3x^2}\right)^{3p} \label{sixth} \\
			&=&  \frac{x^{n-8p-1}\, (3x^3+1)^{3p}}{3^{3p}\,(x+1)^{p-1}} \label{son}
			\end{eqnarray}
			
			where \eqref{first} follows by Lemma~\ref{cactusformula}; \eqref{second} holds as $(-1)^{n_i}=-1$ for $i=1,\dots ,l$; \eqref{third} follows because $x^{n_i-1}-1\leq x^{n_i-1}$ and $x^{n_i-1}+(-1)^{n_i}\leq x^{n_i-1}+1$; \eqref{new} holds by
			Lemma~\ref{generalproductbound} (note that if $l=0$ then $\prod\limits_{i=1}^lx^{n_i-1}=1$ and if $l=p$ then $\prod\limits_{i=l+1}^p(x^{n_i-1}+1)=1$); \eqref{fourth} holds since $x^{3l}\left(x+\frac{1}{3x^2}\right)^{-3l}\leq 1$; \eqref{fifth} is clear; \eqref{sixth} holds because $\left(\sum\limits_{i=1}^p (n_i-1)\right)-3p=n-t-1-3p$; \eqref{son} follows by a routine simplification. Therefore we obtain the desired result.
		\end{proof}

		\noindent \textbf{Proof of Lemma~\ref{cactusson}}
		We shall show that $\pi(G,x+1)<(x+1)_{\downarrow 4}\,x^{n-4}$ holds for every real number $x\geq 2.998$.
		Let $p=6$. By Lemma~\ref{cactusbound}, it suffices to show that 
		$$\frac{x^{n-8p-1}\, (3x^3+1)^{3p}}{3^{3p}\,(x+1)^{p-1}}\leq (x+1)_{\downarrow 4}\,x^{n-4}$$
		which is equivalent to showing that 
		$$(3x^3+1)^{3p}\leq 3^{3p}x^{8p-3}(x+1)_{\downarrow 4}(x+1)^{p-1}.$$
		Let 
		$$q(x)=3^{3p}x^{8p-3}(x+1)_{\downarrow 4}(x+1)^{p-1}-(3x^3+1)^{3p}.$$
		Calculations show that the polynomial $q(x)$ has  positive leading coefficient and the largest real root of $q(x)$ is equal to $2.99791\dots$. Hence the result follows.
	\qed

\section{Concluding Remarks}

To prove our main result we reduced the problem to $3$-connected graphs and made use of typical subgraphs of $3$-connected graphs which are large enough. Existence of such typical subgraphs guarantee that the number of $x$-colorings cannot exceed the desired upper bound. Consequently a natural question to ask is what typical subgraphs do $4$-chromatic graphs have and can we make use of such subgraphs to settle the problem? A well known result due to Dirac \cite{dirac} says that every $4$-chromatic graph has a subgraph that is a subdivision of $K_4$. But unfortunately existence of a subdivision of $K_4$ is not helpful. For example, consider $G=SK_4^{3,4,4}$ which is depicted in Figure~\ref{SK4picture}. Then $G$ has $12$ vertices, 
$$\pi(G,x)={x}^{12}-14\,{x}^{11}+90\,{x}^{10}-352\,{x}^{9}+935\,{x}^{8}-\cdots $$
and 
$$(x)_{\downarrow 4}(x-1)^8={x}^{12}-14\,{x}^{11}+87\,{x}^{10}-318\,{x}^{9}+762\,{x}^{8}-\cdots .
$$
Calculations show that for every real $x>2$,
$$\pi(G,x)\nleq (x)_{\downarrow 4}(x-1)^8.$$

Also, Conjecture~\ref{tomesdongconj} (for $k=4$) was proven in \cite{tomescu} for planar graphs. Therefore, by Lemma~\ref{3connreduction}, it suffices to restrict our attention to $3$-connected nonplanar graphs. It is known that every $3$-connected nonplanar graph distinct from $K_5$ contains a subdivision of $K_{3,3}$ (see, for example, \cite{kelmans}). Note that if $G$ is a subdivision of $K_{3,3}$ then the inequality $\pi(G,x)<(x)_{\downarrow 4}(x-1)^{|V(G)|-4}$ does not hold for every $x\geq 4$, however we believe that it holds for $x\geq 7.405.$
\vskip0.4in

\bibliographystyle{elsarticle-num}

\end{document}